\numberwithin{equation}{section} 
\numberwithin{figure}{section} 
\theoremstyle{plain}
\newtheorem{thm}{Theorem}[section]
  \theoremstyle{remark}
  \newtheorem{rem}[thm]{Remark}
  \theoremstyle{plain}
  \newtheorem{fact}[thm]{Fact}
  \theoremstyle{definition}
  \newtheorem*{example*}{Example}
  \theoremstyle{plain}
  \newtheorem{lem}[thm]{Lemma}
  \theoremstyle{definition}
  \newtheorem{defn}[thm]{Definition}
  \theoremstyle{plain}
  \newtheorem{prop}[thm]{Proposition}
  \theoremstyle{plain}
  \newtheorem{assumption}[thm]{Assumption}
 \theoremstyle{definition}
  \newtheorem{example}[thm]{Example}
\DeclareMathOperator{\card}{card}
\DeclareMathOperator{\spn}{span}
\DeclareMathOperator{\cl}{cl}
\newcommand{\Z}{\mathbb{Z}}
\newcommand{\R}{\mathbb {R}}
\newcommand{\C}{\mathbb {C}}
\newcommand{\N}{\mathbb {N}}
\newcommand{\1}{\mathbbm{1}}
\renewcommand{\epsilon}{\varepsilon}
\renewcommand{\chi}{\1}
\renewcommand{\emptyset}{\varnothing}
\begin{document}
\selectlanguage{english}

\title{Wavelets for iterated function systems}

\author{Jana Bohnstengel and Marc Kesseböhmer}

\date{\today}

\address{Universität Bremen, Fachbereich 3 - Mathematik und Informatik, Bibliothekstrasse
1, 28349 Bremen, Germany}
\begin{abstract}
We construct a wavelet and a generalised Fourier basis with respect
to some fractal measures given by one-dimensional iterated function
systems. In this paper we will not assume that these systems are given
by linear contractions generalising in this way some previous work
of Jørgensen and Dutkay to the non-linear setting. 
\end{abstract}

\keywords{wavelets, scaling functions, Fourier basis, fractals, iterated function
systems.}

\subjclass[2000]{42C40, 28A80}

\maketitle

\section{Introduction}

In this paper we will construct wavelets and generalised Fourier bases
on fractal sets constructed via iterated function systems (IFS), which
we do not assume to be linear. More precisely, the wavelets under
considerations are constructed in the $L^{2}$-space with respect
to the measure of maximal entropy transported to the so-called enlarged
fractal, which is dense in $\mathbb{R}$.

It is a natural approach to consider wavelets in the context of such
fractals since both carry a self-similar structure; the fractal inherits
it from the prescribed scaling of the IFS while the wavelet satisfies
a certain scaling identity (see e.g. (\ref{eq:ScalingEq})). Another
interesting aspect is that both wavelets and fractals are used in
image compression, where both have advantages and disadvantages like
blurring by zooming in or long compression times. Because of these
common features it is of interest to develop a common mathematical
foundation of these objects not least to find out whether it can have
an impact on the theory of data or image compression.

The aim of the wavelet analysis is to approximate functions by using
superposition from a wavelet basis. This basis is supposed to be orthonormal
and derived from a finite set of functions, the so-called mother wavelets
(cf. Proposition \ref{pro:motherW}). To obtain such a basis we employ
the multiresolution analysis (MRA) (cf. Definition \ref{def: MRA-1}).
Our main goal is therefore to set up a MRA in the non-linear situation.
For this we generalise some ideas from \cite{waveletsonfractals,JorgensenPedersen},
which are restricted to homogeneous linear cases with respect to the
restriction of certain Hausdorff measures. 

In Section \ref{sec:Fourier-Basis} we are also going to generalise
the construction of the Fourier basis in the sense of \cite{waveletsonfractals}
to our non-linear setting. This will be done in virtue of a homeomorphism
conjugating the IFS under consideration with a linear homogeneous
IFS. As a consequent of this construction we are able to set up a
generalised Fourier basis also for such linear IFS which do not a
allow a Fourier basis in the sense of \cite{waveletsonfractals} (cf.
Example \ref{exa:-1/3-Cantor-set}).

\section{Wavelets }

\subsection{Enlarged fractal and the measure of maximal entropy\label{homeophi}}

The family \[
\mathcal{S}:=\left(\sigma_{i}:\left[0,1\right]\to\left[0,1\right]:i\in\underline{p}:=\left\{ 0,\ldots,p-1\right\} \right)\]
consisting of $p\in\N$ injective contractions $\sigma_{i}$, which
are uniformly Lipschitz with Lipschitz-constant $0<c_{\mathcal{S}}<1$,
i.e. $|\sigma_{i}(x)-\sigma_{i}(y)|\leq c_{\mathcal{S}}|x-y|$, $x,y\in[0,1]$,
$i\in\underline{p}$. We will always assume that all contractions
have the same orientation (in fact are increasing) and that the IFS
satisfies the open set condition (OSC), i.e. $\bigcup_{i=0}^{p-1}\sigma_{i}((0,1))\subset(0,1)$
and $\sigma_{i}((0,1))\cap\sigma_{j}((0,1))=\emptyset$, $i\neq j$.

It is well known that there exists a unique non empty compact set
$C\subset[0,1]$ such that $C=\bigcup_{i=0}^{p-1}\sigma_{i}(C)$.
This set will be denoted the \emph{limit set} of $\mathcal{S}$. Throughout,
we will assume that the IFS $\left(\sigma_{i}\right)_{i=0}^{p-1}$
is arrange in ascending order, that is $\sigma_{i}([0,1])$ lies to
the left of $\sigma_{i+1}([0,1])$ for all $i=0,\dots,p-2$.

It is always possible to extend the IFS $\mathcal{S}$ by linear contractions
to obtain the IFS \[
\mathcal{T}=\left(\tau_{i}:\left[0,1\right]\to\left[0,1\right]:i\in\underline{N}:=\left\{ 0,\ldots,N-1\right\} \right)\]
which leaves no gaps. More precisely, there exists a number $N\geq p$
and a set $A\subset\underline{N}$ such that 
\begin{enumerate}
\item $\left\{ \tau_{j}:j\in A\right\} =\left\{ \sigma_{i}:i\in\underline{p}\right\} $,
\item $\tau_{0}\left(0\right)=0$, $\tau_{N-1}\left(1\right)=1$ and $\tau_{i}\left(1\right)=\tau_{i+1}\left(0\right)$,
$i=1,\ldots,N-2$,
\item $\forall i\in\underline{N}\setminus A$: $\tau_{i}:\left[0,1\right]\to\left[0,1\right]$
is an affine increasing contraction.
\end{enumerate}
In the following the uniform Lipschitz constant for the IFS $\mathcal{T}$
will be denoted by $c_{\mathcal{T}}$.
\begin{rem}
Note that it is not essential to choose the {}``gap filling functions''
$\tau_{i}$, $i\in\underline{N}\setminus A$, to be affine. Our analysis
would work for any set of contracting injections as long as (1), (2)
and (3) above are satisfied. Nevertheless, the particular choice has
an influence on the the set $R$ and the measure $H$ defined below.
Also note that more than one gap filling function can be defined on
one gap. Throughout, let \[
\rho_{j,N}:\, x\mapsto\frac{x+j}{N}.\]
Then for instance, if $\mathcal{S}$ consists of functions $\sigma_{i}(x)=\frac{x+a_{i}}{N}$,
$a_{i}\in\underline{N}$, $i\in\underline{p},$ it is a natural choice
to extend $\mathcal{S}$ by the functions $\tau_{i}(x)=\rho_{i,N}(x)$,
$i\in\underline{N}\backslash A$, such that $\mathcal{T}$ is equal
to $\left\{ \rho_{i,N}:i\in\underline{N}\right\} $.
\end{rem}
For $\omega:=(i_{1},\dots,i_{k})\in\underline{N}^{k}$ let $\tau_{\omega}:=\tau_{i_{k}}\circ\dots\circ\tau_{i_{1}}$
and $\tau_{\emptyset}=\mbox{id}$ be the identity on $\left[0,1\right]$.
Next we define the\emph{ enlarged fractal} $R$ in two steps. First
we fill the gaps of the fractal $C$ with scaled copies of itself
by letting \[
R_{[0,1]}:=\bigcup_{k\geq0}\bigcup_{\omega\in\underline{N}^{k}}\tau_{\omega}(C),\]
 and then set \[
R:=R_{\left[0,1\right]}+\Z=\bigcup_{l\in\mathbb{Z}}R_{[0,1]}+l.\]
Now let \[
\Sigma:=\left\{ (i_{1},\dots,i_{k})\in\underline{N}^{k}:\: k\in\mathbb{N},\: i_{1}\notin A\right\} \]
be the set of finite words over the alphabet $\underline{N}$ such
that the initial letter is not from $A$. Then we can also write $R_{[0,1]}$
as the \emph{disjoint} union \[
R_{[0,1]}=\bigcup_{\omega\in\Sigma\cup\left\{ \emptyset\right\} }\tau_{\omega}(C).\]

\subsubsection{Fractal measures on the enlarged fractal\label{kolmo}}

In this section we will introduce the appropriate measure $H$ on
$\R$ needed for the MRA. The measure will be first defined on $[0,1]$
and then on $\R$. The construction is analogue to the construction
of $R_{[0,1]}$ and $R$. Let $\mu$ be the self-similar Borel probability
measure supported on $C$ associated to $\mathcal{S}$ with constant
weights, i.e. the unique probability measure $\mu$ satisfying $\mu=\frac{1}{p}\sum_{i\in A}\mu\circ\tau_{i}^{-1}$.
This measure has the property that each set of the form $\tau_{\omega}(C)$,
$\omega\in\underline{N}^{k}$, has measure $p^{-k}$. Thus, $\mu$
is the measure of maximal entropy in the sense of a shift dynamical
system. 

For $\omega=(i_{1},\dots,i_{k})\in\underline{N}^{k}$ we let $\left|\omega\right|=k$
denote the length of $\omega$ and $\left|\emptyset\right|=0$. 
\begin{fact}
The function $\nu:\mathcal{B}\rightarrow\overline{\mathbb{R}}_{0}^{+}$
given by \[
\nu:=\sum_{\omega\in\Sigma\cup\left\{ \emptyset\right\} }p^{-\left|\omega\right|}\mu\circ\tau_{\omega}^{-1}\]
defines a measure on $\left[0,1\right]$. Also, the sum of its translates
\[
H:\mathcal{B}\to\mathbb{R}_{0}^{+},\: B\mapsto\sum_{k\in\mathbb{Z}}\nu(B+k),\]
defines a measure. Its essential support is equal to $R$.
\end{fact}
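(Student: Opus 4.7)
I would handle the three assertions in turn: that $\nu$ is a Borel measure on $[0,1]$, that $H$ is a Borel measure on $\R$, and that $\esssupp(H)=R$.

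For the measure assertions I would invoke the elementary fact that a countable positive-weighted sum of Borel measures is again a Borel measure, $\sigma$-additivity on a disjoint family $(B_k)$ reducing to an exchange of two summations that is justified by Tonelli's theorem for non-negative series. One application of this principle, with $c_\omega=p^{-|\omega|}$ and $\lambda_\omega=\mu\circ\tau_\omega^{-1}$, yields $\nu$; a second application, to the family of integer translates $\nu(\cdot+k)$, $k\in\Z$, of the extension of $\nu$ to $\R$ by zero outside $[0,1]$, yields $H$. There is no issue with $H$ potentially being infinite on bounded intervals: $\sigma$-finiteness follows from the countable decomposition of $R$ into the pieces $\tau_\omega(C)+l$, each of finite $H$-mass $p^{-|\omega|}$.

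For the essential-support claim I would prove the two inclusions separately. First, since the OSC self-similar measure $\mu$ associated to $\mathcal{S}$ with uniform weights has $\supp(\mu)=C$, the pushforward $\mu\circ\tau_\omega^{-1}$ is supported on $\tau_\omega(C)$. Consequently $\nu$ is carried by $R_{[0,1]}=\bigsqcup_{\omega\in\Sigma\cup\{\emptyset\}}\tau_\omega(C)$ and $H$ by $R_{[0,1]}+\Z=R$, so that $H(\R\setminus R)=0$. Conversely, for any open $U\subseteq\R$ meeting $R$, choose $x\in U\cap R$ and decompose $x=y+l$ with $l\in\Z$ and $y\in\tau_\omega(C)$ for some $\omega\in\Sigma\cup\{\emptyset\}$. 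Then $U-l$ is an open neighborhood of $y$; and since $\tau_\omega$ maps $C$ homeomorphically onto $\tau_\omega(C)$ with $\tau_\omega^{-1}(y)\in\supp(\mu)$, we obtain $\mu\circ\tau_\omega^{-1}(U-l)>0$. The single term $\nu(U-l)$ on the right-hand side of $H(U)=\sum_k\nu(U+k)$ is then bounded below by $p^{-|\omega|}\mu\circ\tau_\omega^{-1}(U-l)>0$, so $H(U)>0$.

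The main (mild) obstacle is interpretational: since $R$ is dense in $\R$, the assertion $\esssupp(H)=R$ cannot refer to the topological support of $H$ (which is $\overline{R}=\R$). The two halves above instead establish that $R$ is a Borel carrier of $H$ of full measure and that no point of $R$ admits an $H$-null open neighborhood, which is the appropriate meaning of essential support here. The only substantive input beyond bookkeeping is the standard identification $\supp(\mu)=C$ for OSC self-similar measures with all weights positive.
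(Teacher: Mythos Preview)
The paper states this result as a \emph{Fact} and offers no proof, so there is no argument in the paper to compare against. Your approach is correct and supplies exactly the routine verifications the authors omit: the measure assertions follow from the elementary principle that a countable nonnegative-weighted sum of Borel measures is again a Borel measure (Tonelli justifying the interchange of sums), applied once to obtain $\nu$ and once more to obtain $H$; and the essential-support claim is handled by the two natural inclusions, using only that $\supp(\mu)=C$ for the OSC self-similar measure with positive weights.

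Your interpretational remark is also well taken and worth keeping. Since the paper itself notes that the enlarged fractal $R$ is dense in $\R$, the ordinary (topological) support of $H$ is $\overline{R}=\R$, so ``$\esssupp(H)=R$'' cannot mean this. Your reading---that $R$ is a Borel carrier of full $H$-measure, every point of which lies in the topological support of $H$---is the only sensible one, and your two inclusions establish precisely this.
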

Throughout, for $x\in\R$, let\[
\sigma(x):=\sum_{k\in\Z}\1_{[k,k+1)}\left(x\right)\left(Nk+\sum_{i=0}^{N-1}\1_{[\tau_{i}(0),\tau_{i}(1))}(x-k)\left(\tau_{i}^{-1}(x-k)+i\right)\right)\]
denote the \emph{scaling function} associated to $\mathcal{T}$. Note
that $\sigma^{-1}$ is given, for $x\in\R$, by \[
\sigma^{-1}(x)=\sum_{k\in\Z}\1_{[Nk,N(k+1))}\left(x\right)\left(k+\sum_{i=0}^{N-1}\1_{[i,i+1)}(x-Nk)\left(\tau_{i}(x-i-Nk)\right)\right).\]

\begin{example*}
Let us give an example for a scaling function $\sigma$.

Take the IFS $\mathcal{S=}\left(\sigma_{0},\sigma_{1}\right)$ and
the extended IFS $\mathcal{T}=\left(\tau_{0},\tau_{1},\tau_{2}\right)$
with $\sigma_{0}=\tau_{0}:\, x\mapsto\frac{x^{2}}{5}+\frac{2}{5}x$,
$\tau_{1}:\, x\mapsto\frac{x}{5}+\frac{3}{5}$ and $\sigma_{1}=\tau_{2}:\, x\mapsto\frac{\log(x+1)}{5\cdot\log(2)}+\frac{4}{5}$.
The inverse branches of the maps are illustrated in Figure \ref{fig:IFS}\subref{branches IFS}.
The scaling function $\sigma$ and the inverse of the scaling function
$\sigma^{-1}$ are then given by:\begin{align*}
\sigma(x)= & \sum_{k\in\Z}\1_{[k,k+1)}\left(x\right)\Bigg(\1_{[0,3/5)}(x-k)\left(\sqrt{5(x-k)+1}-1\right)\\
 & +\1_{[3/5,4/5)}(x-k)\left(5(x-k)-2\right)+\1_{[4/5,1)}(x-k)\left(2^{5(x-k)-4}+1\right)+3k\Bigg),\end{align*}
\begin{align*}
\sigma^{-1}(x)= & \sum_{k\in\Z}\1_{[3k,3(k+1))}\left(x\right)\Bigg(\1_{[0,1)}(x-3k)\left(\frac{(x-3k)^{2}}{5}+\frac{2}{5}x-\frac{1}{5}k\right)\\
 & +\1_{[1,2)}(x-3k)\left(\frac{x}{5}+\frac{2}{5}+\frac{2}{5}k\right)+\1_{[2,3)}(x-3k)\left(\frac{\log(x-3k-1)}{5\log(2)}+\frac{4}{5}+k\right)\Bigg).\end{align*}

The graph of $\sigma$ is shown in Figure \ref{fig:IFS}\subref{scaling function}.\renewcommand{\thesubfigure}{ {\rm (\alph{subfigure})}}
\captionsetup[subfigure]{labelformat=simple, labelsep=space, listofformat=subsimple}

\begin{figure}
\hfill{}\subfloat[\label{branches IFS}]{\includegraphics[height=5cm]{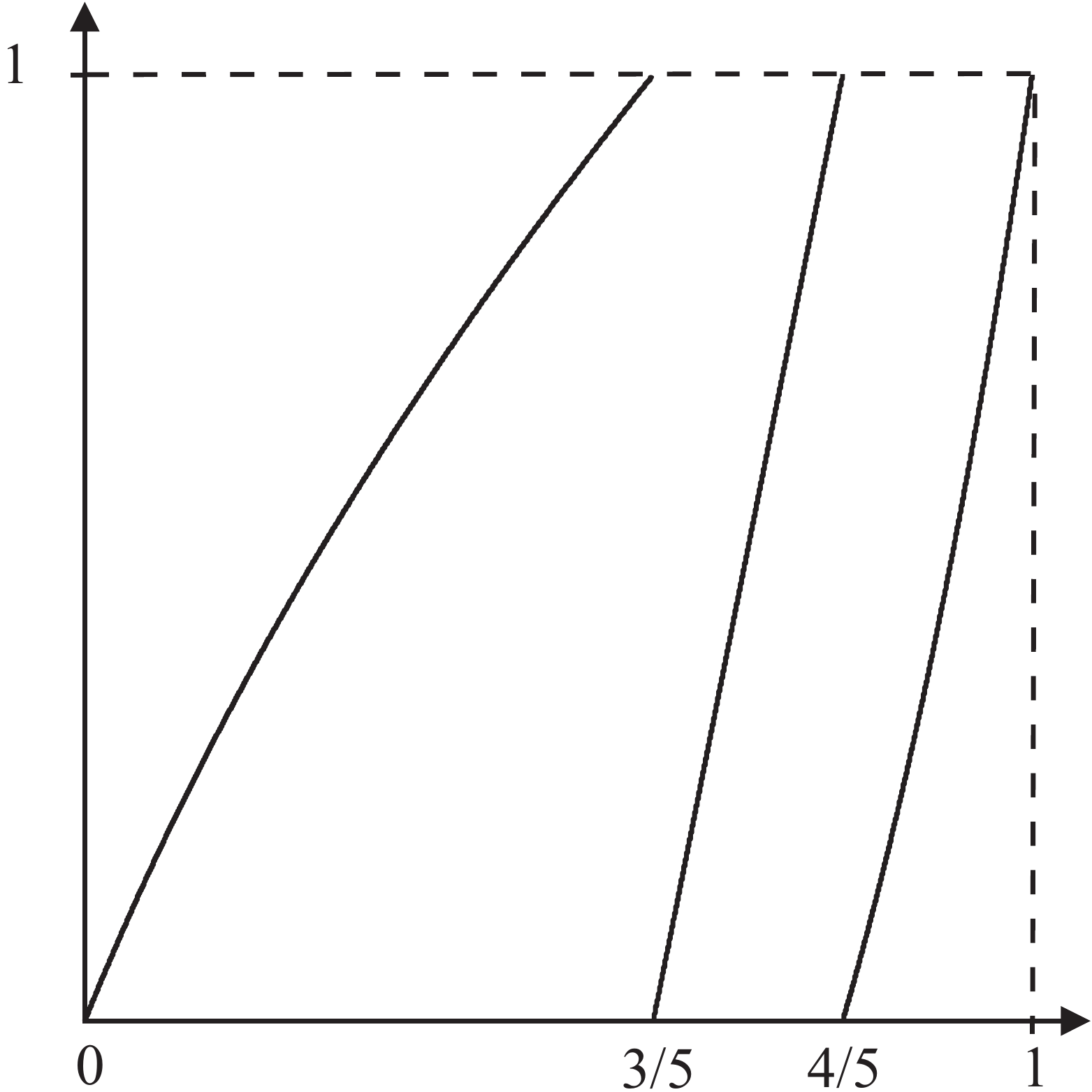}}\hspace{3cm}\subfloat[\label{scaling function}]{\includegraphics[height=5cm]{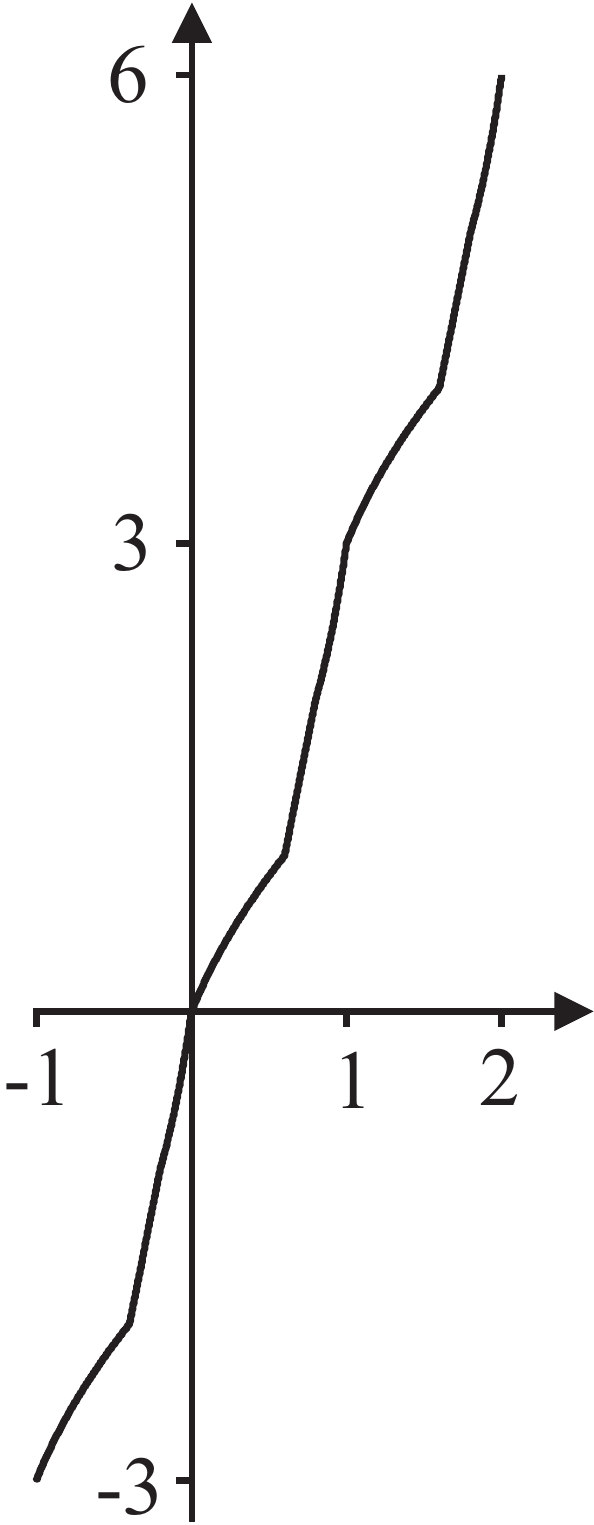}

}\hfill{}

\caption{\label{fig:IFS}The expanding inverse branches of an IFS with corresponding
scaling function $\sigma$. }

\end{figure}

\end{example*}
Let us now turn back to the measure $H$.
\begin{lem}
\label{ma=0000DFeigenschaft}We have $H\circ\sigma=pH$ and in particular,
for all $i\in\underline{N}$, $\nu\circ\tau_{i}=p^{-1}\nu$.\end{lem}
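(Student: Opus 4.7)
The plan is to first establish $\nu\circ\tau_i = p^{-1}\nu$ for every $i\in\underline{N}$ and then derive $H\circ\sigma=pH$ from it. For the first step I would fix a Borel set $B\subset[0,1]$ and expand
\[
\nu(\tau_i(B)) \;=\; \mu(\tau_i(B)) \;+\; \sum_{\omega\in\Sigma} p^{-|\omega|}\,\mu(\tau_\omega^{-1}(\tau_i(B))).
\]
Because $\tau_\omega=\tau_{i_k}\circ\cdots\circ\tau_{i_1}$, the pushforward $\mu\circ\tau_\omega^{-1}$ lives on $\tau_\omega(C)\subset\tau_{i_k}([0,1])$, so by the OSC and the non-atomicity of $\mu$ only words $\omega$ with final letter $i_k=i$ contribute. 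For such $\omega$ write $\omega=\omega' i$; injectivity of $\tau_i$ yields $\mu(\tau_\omega^{-1}(\tau_i(B)))=\mu(\tau_{\omega'}^{-1}(B))$, which is the key reindexing move. The rest splits into two cases. If $i\in A$, the word $(i)$ is \emph{not} in $\Sigma$, but the self-similarity $\mu=p^{-1}\sum_{j\in A}\mu\circ\tau_j^{-1}$ (together with OSC) gives $\mu(\tau_i(B))=p^{-1}\mu(B)$, supplying the missing length-one term. If $i\notin A$, then $\mu(\tau_i(B))=0$ because $\tau_i([0,1])$ meets $C$ only on the boundary, while $(i)$ \emph{is} in $\Sigma$ and its term equals exactly $p^{-1}\mu(B)$. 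In both cases the collected sum over $\omega'\in\Sigma\cup\{\emptyset\}$ yields $p^{-1}\nu(B)$.

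For the second step, the explicit formula shows $\sigma(x+k)=\sigma(x)+Nk$ for $x\in[0,1)$ and $k\in\Z$, and on $[0,1)$ the map $\sigma$ bijects each cylinder $[\tau_i(0),\tau_i(1))$ onto $[i,i+1)$ via $x\mapsto\tau_i^{-1}(x)+i$; these cylinders partition $[0,1)$ since $\tau_0(0)=0$, $\tau_{N-1}(1)=1$ and $\tau_i(1)=\tau_{i+1}(0)$. As $H$ is by construction invariant under $\Z$-translation, it suffices to check $H(\sigma(B))=pH(B)$ for $B\subset[0,1)$. Decomposing $B=\bigsqcup_{i=0}^{N-1}B_i$ along these cylinders, the images $\sigma(B_i)=\tau_i^{-1}(B_i)+i$ lie in pairwise disjoint unit intervals, and
\[
H(\sigma(B))=\sum_{i=0}^{N-1}\nu(\tau_i^{-1}(B_i))=p\sum_{i=0}^{N-1}\nu(B_i)=p\,\nu(B)=p\,H(B),
\]
where the middle equality is the first step applied with $A_i:=\tau_i^{-1}(B_i)$, giving $\nu(B_i)=\nu(\tau_i(A_i))=p^{-1}\nu(A_i)$.

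The principal obstacle is the careful bookkeeping around $\Sigma$: the append-$i$ operation interacts with the constraint ``first letter not in $A$'' differently depending on whether $i\in A$, which is precisely why the length-one contribution comes from $\mu(\tau_i(B))$ in one case and from the word $(i)$ in the other. One must also verify that the countably many boundary points where distinct cylinders touch carry no mass under any pushforward $\mu\circ\tau_\omega^{-1}$, which follows from non-atomicity of $\mu$ (each single point has zero $\mu$-mass by iterating the self-similarity relation) together with the OSC.
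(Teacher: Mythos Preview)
Your argument is correct. The route, however, is the reverse of the paper's: you establish the branch-wise identity $\nu\circ\tau_i=p^{-1}\nu$ first (with the case split $i\in A$ versus $i\notin A$) and then assemble $H\circ\sigma=pH$ from it, whereas the paper computes $H(\sigma(E))$ directly, reducing it to the aggregate identity
\[
\sum_{i=0}^{N-1}\nu\bigl(\tau_i^{-1}(E-l)\bigr)=p\,\nu(E-l),
\]
and only afterwards reads off the per-branch statement as the ``in particular''. The combinatorial core is the same reindexing over $\Sigma$ together with the self-similarity $\mu=p^{-1}\sum_{j\in A}\mu\circ\tau_j^{-1}$; the paper just keeps all $N$ branches bundled, so the two cases $i\in A$/$i\notin A$ appear as the split $\sum_{i\in A}+\sum_{i\notin A}$ of a single sum rather than as two separate computations. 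Your ordering has the minor advantage that the ``in particular'' clause is proved outright rather than left as an (easy) consequence, and it makes transparent why the length-one contribution arises from $\mu(\tau_i(B))$ when $i\in A$ but from the word $(i)\in\Sigma$ when $i\notin A$.
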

\begin{proof}
For $E\in\mathcal{B}$ we have\begin{align*}
H(\sigma(E)) & =\sum_{k\in\Z}\nu(\sigma(E)+k)=\sum_{i=0}^{N-1}\sum_{l\in\Z}\nu(\sigma(E)-Nl-i)\\
 & =\sum_{l\in\Z}\nu\left(\bigcup_{i=0}^{N-1}\tau_{i}^{-1}(E-l)\right).\end{align*}
 Since \begin{align*}
 & \!\!\!\!\!\!\!\!\!\!\nu\left(\bigcup_{i=0}^{N-1}\tau_{i}^{-1}(E-l)\right)\\
 & =\sum_{\omega\in\Sigma\cup\left\{ \emptyset\right\} }p^{-|\omega|}\mu\left(\tau_{\omega}^{-1}\left(\bigcup_{i=0}^{N-1}\tau_{i}^{-1}(E-l)\right)\right)\\
 & =\sum_{i=0}^{N-1}\sum_{\omega\in\Sigma\cup\left\{ \emptyset\right\} }p^{-|\omega|}\mu(\tau_{\omega}^{-1}(\tau_{i}^{-1}(E-l)))\\
 & =\sum_{i=0}^{N-1}\sum_{\omega\in\Sigma}p^{-|\omega|}\mu(\tau_{\omega i}^{-1}(E-l))+\sum_{i=0}^{N-1}\mu(\tau_{i}^{-1}(E-l))\\
 & =p\sum_{\omega\in\Sigma^{*}}p^{-|\omega|}\mu(\tau_{\omega}^{-1}(E-l))+\sum_{i\notin A}\mu(\tau_{i}^{-1}(E-l))+\sum_{i\in A}\mu(\tau_{i}^{-1}(E-l))\\
 & =p\sum_{\omega\in\Sigma}p^{-|\omega|}\mu(\tau_{\omega}^{-1}(E-l))+p\mu(E-l)\\
 & =p\nu(E-l),\end{align*}
where $\Sigma^{*}=\left\{ (i_{1},\dots,i_{k})\in\underline{N}^{k}:\: k\geq2,\: i_{1}\notin A\right\} $,
we have $H(\sigma(E))=pH(E)$.
\end{proof}

\subsection{Construction of wavelet bases for general self-similar fractals\label{wavelet}}

In this section we will show how to find a wavelet basis for $L^{2}\left(H\right)$.
This wavelet basis is constructed via an MRA. In our context the definition
of the MRA is given as follows.
\begin{defn}
\label{def: MRA-1} Let $\sigma:\R\to\R$ be a continuous increasing
function, such that, for some fixed $N\in\N,$ \[
\sigma(x+k)=\sigma(x)+Nk,\quad x\in\left[0,1\right],\: k\in\Z.\]
Furthermore, let $H$ be a measure on $\left(\R,\mathcal{B}\right)$
such that $H\left(A\right)=H\left(A+k\right)$, $A\in\mathcal{B}$,
$k\in\Z$ and $H(\sigma(A))=pH(A)$, for some $p\in\N$. We say $\left(H,\sigma\right)$
allows a\emph{ multiresolution analysis (MRA)} if there exists a family
$\{V_{j}:j\in\mathbb{Z}\}$ of closed subspaces of $L^{2}\left(H\right)$
and a function $\varphi\in L^{2}\left(H\right)$ (called the \emph{father
wavelet}) such that the following conditions are satisfied. 
\begin{enumerate}
\item $\dots\subset V_{2}\subset V_{1}\subset V_{0}\subset V_{-1}\subset V_{-2}\subset\dots$,
\label{enu:DefMRA1}
\item $\text{cl}\bigcup_{j\in\mathbb{Z}}V_{j}=L^{2}(H)$, 
\item $\bigcap_{j\in\mathbb{Z}}V_{j}=\{0\}$, 
\item $f\in V_{j}\iff f\circ\sigma\in V_{j-1}$, $j\in\mathbb{Z}$,
\item $\{x\mapsto\varphi(x-n):\; n\in\mathbb{Z}\}$ is an orthonormal basis
in $V_{0}$.\label{enu:DefMRA5} 
\end{enumerate}
Note that for $\sigma:x\mapsto Nx$ and $H$ chosen to be the Lebesgue
measure, this definition coincides with the classical definition of
the MRA (see e.g. \cite{daubechies}). 

\end{defn}
Let us now define the \emph{shift operator} $T$ and the \emph{scaling
operator} $U$ on $L^{2}(H)$ by \[
(Tg)(x)=g(x-1)\;\mbox{and }\;(Ug)(x)=\frac{1}{\sqrt{p}}g(\sigma^{-1}(x)),\; g\in L^{2}(H),\: x\in\R.\]

\begin{rem}
Both operators $U$ and $T$ are unitary.
\end{rem}
For the remaining part of this section we will demonstrate that the
MRA can be satisfied if we choose the father wavelet $\varphi$ to
be the characteristic function of the fractal $C$, i.e. \[
\varphi:=\chi_{C}.\]
First we observe that the function $\varphi$ satisfies the following
scaling equation for $H$-almost every $x\in\R$.\begin{align}
\varphi(x) & =\chi_{\bigcup_{i\in A}\tau_{i}(C)}(x)=\sum_{i\in A}\chi_{\tau_{i}(C)}(x)=\sum_{i\in A}\chi_{C}(\tau_{i}^{-1}(x))=\sum_{i\in A}\varphi(\tau_{i}^{-1}(x))\nonumber \\
 & =\sum_{i\in A}\varphi(\sigma(x)-i).\label{eq:ScalingEq}\end{align}
By virtue of the so-called low-pass filter $m_{0}$ there exists a
relation between the two operators $T$ and $U.$ Let us take the
filter $m_{0}$ to be given by \[
m_{0}(z):=\frac{1}{\sqrt{p}}\sum_{i\in A}z^{i},\qquad z\in\mathbb{T}:=\left\{ z\in\C:|z|=1\right\} .\]
Note that $m_{0}$ can also be regarded as a function on the set of
unitary operators acting on $L^{2}\left(H\right)$. For this choice
the following proposition holds.
\begin{prop}
\label{ortho} The above defined operators $U$ and $T$ satisfy the
following relations.
\begin{enumerate}
\item $\left\langle T^{k}\varphi|T^{\ell}\varphi\right\rangle _{H}=\delta_{\ell k},$
$k,\ell\in\Z$, 
\item $U\varphi=m_{0}\left(T\right)\varphi,$ 
\item $UTU^{-1}=T^{N}.$
\end{enumerate}
\end{prop}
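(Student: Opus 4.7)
The plan is to verify the three claims in order, with the orthonormality in (1) being the only step that requires a genuine argument about the measure.

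For (1), I would use the translation invariance $H(A+k)=H(A)$ to rewrite
\[
\langle T^{k}\varphi\,|\,T^{\ell}\varphi\rangle_{H}
=\int \chi_{C+k}\chi_{C+\ell}\,\d H
=H\bigl((C+(\ell-k))\cap C\bigr).
\]
For $k\neq\ell$, the translates $C$ and $C+(\ell-k)$ lie in disjoint unit intervals up to at most one common boundary point, and $H$ is non-atomic, so the overlap has $H$-measure zero. For $k=\ell$ I need $H(C)=1$. Since $C\subset[0,1]$ and translates $C+j$ for $j\neq 0$ contribute at most a boundary point to $C$, we have $H(C)=\nu(C)$. Now I would use the decomposition $\nu=\sum_{\omega\in\Sigma\cup\{\emptyset\}}p^{-|\omega|}\mu\circ\tau_{\omega}^{-1}$: the $\omega=\emptyset$ term gives $\mu(C)=1$, and for any $\omega\in\Sigma$ the first letter $i_{1}$ lies outside $A$, so $\tau_{i_{1}}([0,1])$ is a gap disjoint (up to boundary) from $C$, which forces $\mu(\tau_{\omega}^{-1}(C))=0$. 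Thus $\nu(C)=1$.

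For (2), I would simply apply the scaling equation \eqref{eq:ScalingEq}, already established for $H$-almost every $x$:
\[
\varphi(x)=\sum_{i\in A}\varphi(\sigma(x)-i).
\]
Substituting $x=\sigma^{-1}(y)$ and dividing by $\sqrt{p}$ yields
\[
(U\varphi)(y)=\frac{1}{\sqrt{p}}\varphi(\sigma^{-1}(y))=\frac{1}{\sqrt{p}}\sum_{i\in A}\varphi(y-i)=\frac{1}{\sqrt{p}}\sum_{i\in A}(T^{i}\varphi)(y)=(m_{0}(T)\varphi)(y).
\]

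For (3), I first note that by unitarity $U^{-1}g=\sqrt{p}\,g\circ\sigma$. A direct computation then gives
\[
(UTU^{-1}g)(x)=\frac{1}{\sqrt{p}}(TU^{-1}g)(\sigma^{-1}(x))=g\bigl(\sigma(\sigma^{-1}(x)-1)\bigr).
\]
The defining relation $\sigma(x+k)=\sigma(x)+Nk$ for $x\in[0,1]$, $k\in\Z$ iterates to the identity $\sigma(z-1)=\sigma(z)-N$ valid for every $z\in\R$, and applying it with $z=\sigma^{-1}(x)$ yields $\sigma(\sigma^{-1}(x)-1)=x-N$, so $(UTU^{-1}g)(x)=g(x-N)=(T^{N}g)(x)$.

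The only real obstacle is the identification $H(C)=1$ in (1); everything else is a mechanical consequence of the scaling equation \eqref{eq:ScalingEq}, the $\Z$-equivariance of $\sigma$, and the definitions of $U$ and $T$. The subtlety in $H(C)=1$ is realising that the seemingly many terms in $\nu$ collapse to the single $\omega=\emptyset$ contribution precisely because $\Sigma$ is constrained to words whose initial letter lies outside $A$.
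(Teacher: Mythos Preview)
Your proof is correct and follows essentially the same line as the paper. Two small differences are worth noting. For (1) you explicitly justify $H(C)=1$ by showing that only the $\omega=\emptyset$ term in $\nu$ contributes, whereas the paper simply asserts $H\bigl((C+k)\cap(C+\ell)\bigr)=\delta_{k\ell}$ without spelling this out; your extra detail is welcome. For (3) the paper verifies $\sigma(\sigma^{-1}(x)-1)=x-N$ by an explicit piecewise computation with the formulas for $\sigma$ and $\sigma^{-1}$, while you invoke the $\Z$-equivariance $\sigma(z+k)=\sigma(z)+Nk$ directly; your argument is cleaner, though you should note that this relation (stated in Definition~\ref{def: MRA-1} only for $x\in[0,1]$) is immediate from the explicit definition of $\sigma$ in Section~\ref{homeophi}.
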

\begin{proof}
ad (1): Since $H\left((C+k)\cap(C+\ell)\right)=0$ for $k\not=\ell$
and $H$ is invariant with respect to the mapping $x\mapsto x+1$,
we have, \[
\begin{array}{lll}
\left\langle T^{k}\varphi|T^{\ell}\varphi\right\rangle _{H} & = & \int T^{k}\varphi(x)\overline{T^{\ell}\varphi(x)}\, dH(x)\\
 & = & \int\chi_{C}(x-k)\overline{\chi_{C}(x-\ell)}\, dH(x)\\
 & = & \int\chi_{C+k}(x)\chi_{C+\ell}(x)\, dH(x)=\delta_{kl}.\end{array}\]

ad (2): For $x\in\R$ we have \begin{align*}
U\varphi(x) & =\frac{1}{\sqrt{p}}\chi_{C}\left(\sigma^{-1}(x)\right)=\frac{1}{\sqrt{p}}\chi_{\sigma\left(C\right)}\left(x\right)=\frac{1}{\sqrt{p}}\chi_{\cup_{i\in A}\left(\tau_{i}^{-1}\left(\tau_{i}(C)\right)\right)+i}\left(x\right)\\
 & =\frac{1}{\sqrt{p}}\sum_{i\in A}\chi_{C}(x-i)=m_{0}(T)\varphi(x).\end{align*}

ad (3): Let $f\in L^{2}(H)$ and $x\in\R$. Then \[
\left(UTU^{-1}f\right)(x)=f\left(\sigma(\sigma^{-1}(x)-1)\right).\]
For $x\in[Nk,N(k+1))$ we have that $\sigma^{-1}(x)=\tau_{j}(x-j-Nk)+k$
for some $j\in\{0,\dots,N-1\}$. Thus, $\sigma^{-1}(x)\in[k,k+1)$
and $\sigma^{-1}(x)-1\in[k-1,k)$. Now observe\begin{align*}
\sigma(\sigma^{-1}(x)-1) & =\sigma\left(\tau_{j}(x-j-Nk)+k-1\right)\\
 & =\sum_{i=0}^{N-1}\1_{[\tau_{i}(0),\tau_{i}(1))}(\tau_{j}(x-j-Nk)+k-1-(k-1))\\
 & \qquad\qquad\times\left(\tau_{i}^{-1}\left(\tau_{j}(x-j-Nk)+k-1-(k-1)\right)+i\right)+N(k-1)\\
 & =\tau_{j}^{-1}\left(\tau_{j}(x-j-Nk)+k-1-(k-1)\right)+j+N(k-1)\\
 & =x-N.\end{align*}
 Consequently, $f\left(\sigma(\sigma^{-1}(x)-1)\right)=f(x-N)=T^{N}f(x)$.\end{proof}
\begin{rem}
Notice that \begin{align*}
U^{j}T^{k}\varphi(x) & =\left(\frac{1}{\sqrt{p}}\right)^{j}\varphi(\sigma^{-j}(x)-k)=\left(\frac{1}{\sqrt{p}}\right)^{j}\chi_{C}(\sigma^{-j}(x)-k)\\
 & =\left(\frac{1}{\sqrt{p}}\right)^{j}\chi_{\sigma^{j}(C+k)}(x).\end{align*}
 \end{rem}
\begin{thm}
\label{mra}The pair $(\sigma,H)$ allows an MRA if we set $\varphi:=\chi_{C}$
to be the father wavelet and let \textup{$V_{0}:=\cl\spn\left\{ T^{k}\varphi:k\in\Z\right\} $,
$V_{j}:=\cl\spn\left\{ U^{j}T^{k}\varphi:k\in\Z\right\} $, $j\in\Z$.}
In particular, we have \[
\cl\spn\left\{ U^{n}T^{k}\varphi:k,n\in\mathbb{Z}\right\} =L^{2}(H).\]
 \end{thm}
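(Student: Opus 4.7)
The plan is to verify the five axioms of Definition \ref{def: MRA-1} for the spaces $\{V_j\}$. Axiom (5) is precisely Proposition \ref{ortho}(1) combined with the definition of $V_0$. Axiom (4) follows because $V_j = U^j V_0$ by construction, and since $(U^{-1}f)(x)=\sqrt{p}\,f(\sigma(x))$, the functions $U^{-1}f$ and $f\circ\sigma$ differ only by the non-zero scalar $\sqrt{p}$, so $f\in V_j$ if and only if $f\circ\sigma\in V_{j-1}$. Axiom (1) is a consequence of Proposition \ref{ortho}(2)--(3): the relations $UTU^{-1}=T^N$ and $U\varphi = m_0(T)\varphi$ combine to give
$$
U T^k \varphi \;=\; T^{Nk}\,U\varphi \;=\; T^{Nk}\,m_0(T)\varphi \;=\; \tfrac{1}{\sqrt{p}}\sum_{i\in A} T^{Nk+i}\varphi \;\in\; V_0,
$$
hence $V_1 \subset V_0$, and applying the unitary $U^j$ yields $V_{j+1}\subset V_j$ for every $j\in\Z$.

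For axiom (3), by unitarity of $U$ together with Proposition \ref{ortho}(1), the family $\{U^jT^k\varphi = p^{-j/2}\chi_{\sigma^j(C+k)}\}_{k\in\Z}$ is an orthonormal basis of $V_j$; thus the orthogonal projection $P_j$ onto $V_j$ is the conditional expectation associated with the (essentially disjoint) partition $\{\sigma^j(C+k) : k\in\Z\}$, whose atoms each have $H$-measure $p^j$ by Lemma \ref{ma=0000DFeigenschaft}. For $f=\chi_E$ with $H(E)<\infty$ one computes
$$
\|P_j f\|_{2}^{2} \;=\; p^{-j}\sum_{k\in\Z} H(E\cap \sigma^j(C+k))^2 \;\leq\; p^{-j}\,H(E)\sum_{k\in\Z} H(E\cap \sigma^j(C+k)) \;\leq\; p^{-j}\,H(E)^2,
$$
which vanishes as $j\to\infty$. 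Since indicators of sets of finite $H$-measure span a dense subspace of $L^2(H)$ and $\|P_j\|\leq 1$, the projections $P_j$ converge strongly to zero, and $\bigcap_{j\in\Z} V_j=\{0\}$ follows.

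The density axiom (2) is the main obstacle. The plan is to show that $\bigcup_{j\in\Z} V_j$ contains the indicator functions of all cells $\tau_\omega(C)+k$ with $\omega\in\underline{N}^n$, $k\in\Z$, and then to use these to exhaust $L^2(H)$. Since $U^{-n}T^k\varphi = p^{n/2}\chi_{\sigma^{-n}(C+k)}$ and $\sigma^{-1}$ acts on each unit interval piecewise through the branches $\tau_i$, the set $\sigma^{-n}(C+k)$ decomposes into a finite disjoint union of cells $\tau_\omega(C)+k'$ with $|\omega|=n$; conversely, the scaling identity (\ref{eq:ScalingEq}) can be iterated to express each individual cell indicator as a finite linear combination of basis elements $U^{-m}T^{k'}\varphi$, placing every cell indicator inside $\bigcup_j V_j$. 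The Lipschitz bound $c_{\mathcal{T}}<1$ forces the diameters of such cells to decay like $c_{\mathcal T}^{|\omega|}\to 0$, so the cells generate the Borel $\sigma$-algebra on the essential support $R$ of $H$ modulo $H$-nullsets. A standard monotone class (or Vitali-type) approximation then yields $\chi_E\in\cl\bigcup_j V_j$ for every Borel $E$ with $H(E)<\infty$, giving density. The most delicate step is precisely this inversion of the piecewise decomposition: recovering a single cell indicator $\chi_{\tau_\omega(C)+k'}$ from a combination of the basis elements $U^{-n}T^k\varphi$ via iterated use of the scaling identity.
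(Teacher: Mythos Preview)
Your overall plan coincides with the paper's: verify the five MRA axioms. Axioms (1), (4), (5) are handled essentially identically. Your argument for axiom (3) is a genuine alternative to the paper's: you show $\|P_j\chi_E\|_2^2\le p^{-j}H(E)^2\to 0$ as $j\to+\infty$, whereas the paper argues directly that a nonzero $f\in\bigcap_jV_j$ would be constant $c\neq 0$ on a nested union $\bigcup_{j\le j_0}\sigma^{j}(C+k_j)$ of infinite measure. Your projection estimate is a clean variant; note only that $\{\sigma^j(C+k):k\in\Z\}$ is not a partition of the support of $H$ (the gap cells are missing), so $P_j$ is not a conditional expectation, but your norm bound only uses disjointness and hence survives.

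The one place where your proposal diverges from the paper in a substantive way is axiom (2), and here you have created a difficulty that does not exist. You picture $\sigma^{-n}(C+k)$ as a finite disjoint union of cells $\tau_\omega(C)+k'$ and then worry about an ``inversion'' step to isolate a single cell via the scaling identity. In fact the paper computes explicitly, from the closed formula for $\sigma^{-1}$, that
\[
\sigma^{-n}\!\left(C+\sum_{i=0}^{n-1}a_iN^i+N^nk\right)=\tau_{a_{n-1}}\circ\cdots\circ\tau_{a_0}(C)+k,
\]
so the correspondence between basis elements $U^{-n}T^m\varphi$ and cells $\tau_\omega(C)+k$ is a \emph{bijection}: writing $m$ in base $N$ gives $\omega$ and $k$, and every $\omega\in\underline{N}^n$ (including those with gap letters) arises. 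Hence each single cell indicator equals a scalar multiple of a single basis element, and there is nothing to invert. Your suggested tool for the inversion, the scaling identity (\ref{eq:ScalingEq}), only involves indices in $A$ and would not by itself produce cells $\tau_\omega(C)$ with letters in $\underline{N}\setminus A$, so that route would not close the gap anyway. Once the bijection is in hand, the semiring/monotone-class approximation you describe goes through exactly as in the paper.
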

\begin{proof}
To prove that this gives an MRA, we show that the conditions (\ref{enu:DefMRA1})
to (\ref{enu:DefMRA5}) from Definition \ref{def: MRA-1} are satisfied. 

ad (1): Recall that $U\varphi=m_{0}(T)\varphi$ and $UTU^{-1}=T^{N}$.
Consequently, \[
U^{-1}T^{k}\varphi=U^{-1}m_{0}(T)U^{-1}T^{k}\varphi=U^{-2}m_{0}(T^{N})T^{k}\varphi.\]
This shows that $V_{-1}\subset V_{-2}$ and iterating this argument
it follows that $\dots V_{1}\subset V_{0}\subset V_{-1}\subset\dots$.

ad (3): Clearly $0\in\bigcap_{j\in\mathbb{Z}}V_{j}$. Recall that
$R$ is equal to the essential support of $H$. Now take $f\in\bigcap_{j\in\mathbb{Z}}V_{j}$.
Then $f\in V_{j}$ for all $j\in\Z$. Notice that if $0\neq f\in V_{j}$
for some $j_{0}\in\Z$ it follows that $f|_{\sigma^{j_{0}}(C+k)}=c\1_{\sigma^{j_{0}}(C+k)}$,
$c\neq0$ and since $\left(V_{j}\right)_{j\leq j_{0}}$ is a nested
sequence it follows that for every $j\leq j_{0}$ there exists exactly
one $k_{j}\in\Z$ such that $f|_{\sigma^{j}(C+k_{j})}=c\1_{\sigma^{j}(C+k_{j})}$
and consequently $f$ takes the value $c$ on the nested union $\bigcup_{j\leq j_{0}}\sigma^{j}(C+k_{j})$.
Since this union has infinite measure, $f$ must be constantly $0$.

ad (4): Let $f\in V_{j}$, i.e. for some $b_{k}\in\mathbb{C}$ \[
f(x)=\sum_{k}b_{k}U^{j}T^{k}\varphi(x)=\sum_{k}c_{k}\varphi(\sigma^{-j}(x)-k)\]
 and \[
\begin{array}{lllll}
f(\sigma(x)) & = & \sum_{k}b_{k}U^{j}T^{k}\varphi(\sigma(x)) & = & \sum_{k}c_{k}\varphi(\sigma^{-j}(\sigma(x))-k)\\
 & = & \sum_{k}c_{k}\varphi(\sigma^{-j+1}(x)-k) & = & \sum_{k}d_{k}U^{j-1}T^{k}\varphi(x).\end{array}\]
 Thus, $f\circ\sigma\in V_{j-1}$.

ad (5): In Proposition \ref{ortho} it has been shown that $\left(T^{k}\varphi\right)_{k\in\Z}$
is orthonormal. The spanning condition is trivially satisfied.

ad (2): First we will shown that $\1_{F}$ for $F\in\mathcal{B}|_{[k,k+1]}\,\mod\, H$,
$k\in\Z$, can be approximated by linear combinations of $U^{n}T^{m}\varphi=\sqrt{1/p}^{n}\cdot\1_{\sigma^{n}(C+m)}$,
$m,n\in\Z$. For this let us define the set \[
\mathcal{V}_{k}:=\left\{ C_{\omega,k}:=\sigma^{-n}\left(C+\sum_{i=0}^{n-1}a_{i}N^{i}+N^{n}k\right):\,\omega\in\mathcal{C}_{n},\, n\geq1\right\} ,\]
where\[
\mathcal{C}_{n}:=\left\{ (a_{1},\dots,a_{n}):\, a_{i}\in\underline{N}\right\} .\]
We are going to show that $\mathcal{V}_{k}\cup\emptyset$ defines
a semiring for $[k,k+1]$. Since \[
C+\sum_{i=0}^{n-1}a_{i}N^{i}+N^{n}k\subset\left[N\left(\sum_{i=1}^{n-1}a_{i}N^{i-1}+N^{n-1}k\right),N\left(\sum_{i=1}^{n-1}a_{i}N^{i-1}+N^{n-1}k+1\right)\right]\]
 we get inductively \[
\begin{array}{lll}
 & \!\!\!\!\!\!\!\!\!\!\!\!\sigma^{-n}\left(C+\sum_{i=0}^{n-1}a_{i}N^{i}+N^{n-1}k\right)\\
= & \sigma^{-n+1}\Bigg(\sum\1_{[j,j+1)}\left(C+\sum_{i=0}^{n-1}a_{i}N^{i}+N^{n}k-N\left(\sum_{i=1}^{n-1}a_{i}N^{i-1}+N^{n-1}k\right)\right)\\
 & \,\left(\tau_{j}\left(C+\sum_{i=0}^{n-1}a_{i}N^{i}+N^{n}k-N\left(\sum_{i=1}^{n-1}a_{i}N^{i-1}+N^{n-1}k\right)-j\right)+\sum_{i=1}^{n-1}a_{i}N^{i-1}+N^{n-1}k\right)\Bigg)\\
= & \sigma^{-n+1}\left(\sum_{j=0}^{N-1}\1_{[j,j+1)}(C+a_{0})\left(\tau_{j}(C+a_{0}-j)+\sum_{i=1}^{n-1}a_{i}N^{i-1}+N^{n-1}k\right)\right)\\
= & \sigma^{-n+1}\left(\tau_{a_{0}}(C)+\sum_{i=1}^{n-1}a_{i}N^{i-1}+N^{n-1}k\right)\\
= & \sigma^{-n+2}\left(\tau_{a_{1}}\left(\tau_{a_{0}}(C)\right)+\sum_{i=2}^{n-1}a_{i}N^{i-2}+N^{n-2}k\right)\\
\vdots\\
= & \sigma^{-1}\left(\tau_{a_{n-2}}\left(\cdots\left(\tau_{a_{1}}\left(\tau_{a_{0}}(C)\right)\right)\cdots\right)+a_{n-1}+Nk\right)\\
= & \tau_{a_{n-1}}\left(\tau_{a_{n-2}}\left(\cdots\left(\tau_{a_{1}}\left(\tau_{a_{0}}(C)\right)\right)\cdots\right)\right)+k.\end{array}\]
From this the semiring properties of $\mathcal{V}_{k}\cup\left\{ \emptyset\right\} $
follow immediately. Then also \[
\mathcal{V}:=\left\{ \bigcup_{\ell=1}^{m}B_{\ell}:B_{\ell}\in\bigcup_{k}\mathcal{V}_{k},m\in\N\right\} \]
defines a semiring. Furthermore, we will show that $\mathcal{B}|_{[k,k+1]}\subset\sigma\left(\mathcal{V}_{k}\right)$
which would also imply $\mathcal{B}\subset\sigma\left(\mathcal{V}\right)$.
In fact $\mathcal{B}|_{[k,k+1]}$ is generated $\!\!\mod H$ by sets
of the form $(a,b)\cap[k,k+1]\cap R$, $a,b\in\mathbb{R}$, which
belong obviously to $\sigma\left(\mathcal{V}_{k}\right)$. This shows
$\mathcal{B}|_{[k,k+1]}\subset\sigma\left(\mathcal{V}_{k}\right)$.
Thus, every set $F\in\mathcal{B}|_{[k,k+1]}$ can be approximated
by sets from $\mathcal{V}_{k}$ and consequently every set $E\in\mathcal{B}$
can be approximated by sets of $\mathcal{V}$. Since \[
\begin{array}{lll}
\1_{C_{\omega,k}}(x) & = & \1_{\sigma^{-n}\left(C+\sum_{i=0}^{n-1}a_{i}N^{i}+N^{n}k\right)}(x)\\
 & = & \1_{C}\left(\sigma^{n}(x)-\sum_{i=0}^{n-1}a_{i}N^{i}-N^{n}k\right)\\
 & = & c\cdot U^{-n}T^{l}\varphi(x),\end{array}\]
 where $l=\sum_{i=0}^{n-1}a_{i}N^{i}+N^{n}k$, we find that $\1_{E}$,
$E\in\mathcal{B}$, can be approximated by linear combinations of
$U^{n}T^{k}\varphi$. Now the claim follows since the simple functions
are dense in $L^{2}\left(H\right)$. 
\end{proof}
For the construction of the mother wavelets we will introduce further
filter functions. Let $A=\{a_{0},\dots,a_{p-1}\}$ and $G:=\{0,\dots N-1\}\backslash\{a_{0},\dots,a_{p-1}\}=\left\{ d_{i}:i=0,\dots,N-p-1\right\} $.
Then the first $N-p$ high pass filters, $m_{1},\dots,m_{N-p}$, on
$\mathbb{T}$ are defined by \[
m_{i+1}:\, z\mapsto z^{d_{i}},\, i\in\underline{N-p-1}.\]
 The remaining $p-1$ filter functions are defined by \[
m_{N-p+k}:\, z\mapsto\frac{1}{\sqrt{p}}\sum_{j=0}^{p-1}\eta^{kj}z^{a_{j}},\:\mbox{for}\: k\in\left\{ 1,\dots,p-1\right\} ,\:\eta=e^{2\pi i/p}.\]

It has been shown in \cite{waveletsonfractals} that the matrix \[
M(z):=\frac{1}{\sqrt{N}}\left(m_{j}(\rho^{l}z)\right)_{j,l=0}^{N-1},\]
 where $\rho=e^{2\pi i/N}$, is unitary for almost all $z\in\mathbb{T}$
(i.e. $M(z)^{*}M(z)=I$, where $I$ denotes the identity matrix). 

The following proposition shows that \[
\left\{ \psi_{i}:=U^{-1}m_{i}\left(T\right)\varphi,\; i\in\{1,\dots,N-1\}\right\} \]
defines a set of mother wavelets. 
\begin{prop}
\label{pro:motherW}The set \[
\{U^{n}T^{k}\psi_{i}:\, i\in\{1,\dots,N-1\},\, n,\, k\in\mathbb{Z}\}\]
 is an ONB for $L^{2}(H)$. \end{prop}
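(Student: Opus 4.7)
The plan is to mimic the classical MRA construction of a wavelet ONB: first show that the shifted mother wavelets $\{T^{k}\psi_{i}:1\le i\le N-1,\,k\in\Z\}$ form an ONB of the wavelet space $W:=V_{-1}\ominus V_{0}$, and then use the unitarity of $U$ together with the MRA axioms from Theorem \ref{mra} to tile $L^{2}(H)$ by the dilates $U^{n}W$.

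For the orthogonality step I would use $TU^{-1}=U^{-1}T^{N}$ from Proposition \ref{ortho}(3) to rewrite
\[
T^{k}\psi_{i} \;=\; T^{k}U^{-1}m_{i}(T)\varphi \;=\; U^{-1}T^{Nk}m_{i}(T)\varphi \;=\; \sum_{a}m_{i,a}\,U^{-1}T^{Nk+a}\varphi,
\]
where $m_{i}(z)=\sum_{a}m_{i,a}z^{a}$. Since $U$ is unitary and $(T^{c}\varphi)_{c\in\Z}$ is an ONB of $V_{0}$ by Proposition \ref{ortho}(1), the family $(U^{-1}T^{c}\varphi)_{c\in\Z}$ is an ONB of $V_{-1}$. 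Setting also $\psi_{0}:=\varphi=U^{-1}m_{0}(T)\varphi$ (cf.\ Proposition \ref{ortho}(2)), one computes
\[
\langle T^{k}\psi_{i}\,|\,T^{l}\psi_{j}\rangle_{H} \;=\; \sum_{a}m_{i,a}\,\overline{m_{j,N(k-l)+a}},
\]
which is the $(-N(k-l))$-th Fourier coefficient of $m_{i}(z)\overline{m_{j}(z)}$ on $\mathbb{T}$. The identity $MM^{*}=I$ reads $\frac{1}{N}\sum_{\ell=0}^{N-1}m_{i}(\rho^{\ell}z)\overline{m_{j}(\rho^{\ell}z)}=\delta_{ij}$ a.e., and via $\frac{1}{N}\sum_\ell \rho^{\ell n}=\1_{N\mid n}$ this is exactly the statement that the $N\Z$-indexed Fourier coefficients of $m_{i}\overline{m_{j}}$ equal $\delta_{ij}\delta_{m,0}$. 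Hence $\langle T^{k}\psi_{i}\,|\,T^{l}\psi_{j}\rangle_{H}=\delta_{ij}\delta_{kl}$ for all $i,j\in\{0,\dots,N-1\}$ and $k,l\in\Z$.

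Completeness in $V_{-1}$ is most conveniently seen via the isometry $V_{-1}\cong L^{2}(\mathbb{T})$ given by $U^{-1}T^{c}\varphi\mapsto z^{c}$, under which $T^{k}\psi_{i}\mapsto z^{Nk}m_{i}(z)$. The polyphase factorisation $M(z)=\widetilde{M}(z^{N})\,\mathrm{diag}(1,z,\dots,z^{N-1})\,F$, where $F$ is the unitary $N\times N$ DFT and $\widetilde{M}(w)_{i,j}=\sum_{\ell}m_{i,N\ell+j}w^{\ell}$, shows that unitarity of $M(z)$ a.e.\ is equivalent to unitarity of $\widetilde{M}(w)$ a.e., and this in turn is equivalent to $\{z^{Nk}m_{i}(z):0\le i\le N-1,\,k\in\Z\}$ being an ONB of $L^{2}(\mathbb{T})$. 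Thus $\{T^{k}\psi_{i}\}_{i,k}$ is an ONB of $V_{-1}$, so $W=\cl\spn\{T^{k}\psi_{i}:1\le i\le N-1,\,k\in\Z\}$ coincides with $V_{-1}\ominus V_{0}$. Applying the unitary $U^{n}$ to $V_{-1}=V_{0}\oplus W$ yields $V_{n-1}=V_{n}\oplus U^{n}W$, and combined with $\cl\bigcup_{j}V_{j}=L^{2}(H)$ and $\bigcap_{j}V_{j}=\{0\}$ from Theorem \ref{mra} this forces $L^{2}(H)=\bigoplus_{n\in\Z}U^{n}W$, delivering the claimed ONB.

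The main obstacle is the completeness step: orthonormality of the mother-wavelet translates drops out of $MM^{*}=I$ almost formally, but asserting that $\{T^{k}\varphi\}\cup\{T^{k}\psi_{i}:i\ge 1\}$ exhausts \emph{all} of $V_{-1}$ (and not some proper closed subspace) requires the converse direction of the equivalence between unitarity of $M(z)$ and the ONB property on $L^{2}(\mathbb{T})$. This rests crucially on $(U^{-1}T^{c}\varphi)_{c\in\Z}$ being a genuine basis of $V_{-1}$, which is precisely what the density argument in the proof of Theorem \ref{mra} establishes.
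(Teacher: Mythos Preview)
Your proof is correct and follows essentially the same strategy as the paper's own argument: use the unitarity of the filter matrix $M(z)$ to obtain orthonormality of $\{T^{k}\psi_{i}:0\le i\le N-1,\,k\in\Z\}$ in $V_{-1}$, identify $\{T^{k}\psi_{i}:1\le i\le N-1,\,k\in\Z\}$ as an ONB of $V_{-1}\ominus V_{0}$, and then telescope using the MRA properties and the unitarity of $U$. The paper's proof merely asserts these steps (``the orthonormality \ldots\ follows from the unitarity of the filter functions'' and ``$\{T^{k}\psi_{i}\}$ is an ONB for $V_{-1}\ominus V_{0}$''), whereas you supply the standard computations---the Fourier-coefficient identity from $MM^{*}=I$ for orthogonality, and the polyphase factorisation of $M(z)$ under the isometry $V_{-1}\cong L^{2}(\mathbb{T})$ for completeness---that actually justify them.
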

\begin{proof}
In Theorem \ref{mra} it has been shown that the father wavelet $\varphi$
gives rise to an MRA and consequently $\left\{ U^{n}T^{k}\varphi:\, k,n\in\Z\right\} $
spans $L^{2}(H)$. This implies that also $\{U^{n}T^{k}\psi_{i}:\, i\in\{1,\dots,N-1\},\, n,\, k\in\mathbb{Z}\}$
spans $L^{2}\left(H\right)$. Furthermore, the orthonormality of $\left\{ \psi_{i}:\, i\in\{1,\dots,N-1\}\right\} $
follows form the unitarity of the filter functions. Finally, since
$\left\{ T^{k}\psi_{i}:\, i\in\{1,\dots,N-1\},\, k\in\Z\right\} $
is an ONB for $V_{-1}\ominus V_{0}$ and hence $\left\{ U^{n}T^{k}\psi_{i}:\, i\in\{1,\dots,N-1\},\, k\in\Z\right\} $
is an ONB for $V_{n-1}\ominus V_{n}$ the orthonormality of $\{U^{n}T^{k}\psi_{i}:\, i\in\{1,\dots,N-1\},\, n,\, k\in\mathbb{Z}\}$
follows. 
\end{proof}

\section{Fourier Basis\label{sec:Fourier-Basis}}

Also in this section we will use the set up of Section \ref{homeophi}.
That is, we consider an arbitrary IFS $\mathcal{S=}\left(\sigma_{i},i\in\underline{p}\right)$
extended to a {}``gap filling'' IFS $\mathcal{T}=\left(\tau_{i},i\in\underline{N}\right)$
consisting of $N$ contractions such that there exists a set $A\subset\underline{N}$
with $\left\{ \sigma_{i}:i\in\underline{p}\right\} =\left\{ \tau_{j}:j\in A\right\} $.
Additionally, we consider two corresponding \emph{homogeneous linear
IFSs $\widetilde{S}=\left(\widetilde{\sigma}_{i},i\in\underline{p}\right)$
and} $\widetilde{\mathcal{T}}=\left(\widetilde{\tau}_{i},i\in\underline{N}\right)$,
given by the functions $\widetilde{\tau}_{i}:=\rho_{i,N}:x\mapsto\frac{x}{N}+\frac{i}{N}$
such that $\left\{ \widetilde{\sigma}_{i}:i\in\underline{p}\right\} =\left\{ \widetilde{\tau}_{j}:j\in A\right\} $.

\subsection{Construction of a conjugating homeomorphism \label{homemomorphism}}

Let us now investigate the construction of the conjugating homeomorphism
from the linear enlarged fractal to the non-linear one. First the
construction is given for $[0,1]$ to be extended to $\R$ in the
second step. This homeomorphism on $\R$ can be employed for a different
approach to construct the wavelet basis from Section \ref{wavelet}.
See Remark \ref{rem:wavelet homeo} for a detailed discussion.

The aim is to find a homeomorphism $\phi:[0,1]\rightarrow[0,1]$ such
that $\phi(\widetilde{R}_{[0,1]})=R_{[0,1]}$, $\phi(\widetilde{C})=C$
and $\phi\circ\widetilde{\tau}_{i}=\tau_{i}\circ\phi$ for $i\in\underline{N}$,
where $C,$ $\widetilde{C}$ are the limit sets corresponding to the
IFS $\mathcal{S}$, $\widetilde{S}$ respectively, and $R_{[0,1]}$,
$\widetilde{R}_{[0,1]}$ are the corresponding enlarged fractals restricted
to $[0,1]$. The idea of the construction can be found e.g. in \cite{kesse}.

Let $D:=\{f\in C([0,1]):\, f(0)=0,\, f(1)=1,\, f:[0,1]\rightarrow[0,1]\}$
and let the operator $F:D\to D$ be given by \[
(Ff)(x)=\sum_{i=0}^{N-1}\tau_{i}\circ f\circ\widetilde{\tau}_{i}^{-1}(x)\cdot\1_{[\widetilde{\tau}_{i}(0),\widetilde{\tau}_{i}(1))}(x)+\1_{\{1\}}(x),\,\, x\in[0,1].\]

Then it is easy to see that $F$ is a contraction and since $D$ is
complete, we have by the Banach Fixed Point Theorem, that there exists
a fixed point $\phi$ of $F$ in $D$. It is not hard to see that
the inverse function $\phi^{-1}$ of $\phi$ is the unique fixed point
of the contractive operator on $D$ given by \[
(Gh)(x)=\sum_{i=0}^{N-1}\widetilde{\tau}_{i}\circ h\circ\tau_{i}^{-1}(x)\cdot\1_{\left[\tau_{i}(0),\tau_{i}(1)\right)}(x)+\1_{\{1\}}(x),\,\, h\in D,\, x\in[0,1].\]

Consequently $\phi:[0,1]\rightarrow[0,1]$ is a homeomorphism and
it is straight forward to observe that it has all the desired properties.
This homeomorphism may then be extended continuously to $\R$, such
that $\phi(\widetilde{R})=R$. For this notice that any $x\in\mathbb{R}$
can be written uniquely as $x=\{x\}+\left\lfloor x\right\rfloor $,
where $\left\lfloor x\right\rfloor \in\mathbb{Z}$ denotes the largest
integer not exceeding $x$ and $\{x\}=x-\left\lfloor x\right\rfloor $
the fractional part of $x$. Then the extended homeomorphism is defined,
for $x\in\R$, by \[
\widetilde{\phi}(x):=\phi(\{x\})+\left\lfloor x\right\rfloor ,\]
 and consequently, its inverse function $\widetilde{\phi}^{-1}:\mathbb{R}\rightarrow\mathbb{R}$
is given by \[
\widetilde{\phi}^{-1}(z)=\phi^{-1}(\{z\})+\left\lfloor z\right\rfloor ,\; z\in\R.\]

\begin{rem}
\label{rem:wavelet homeo}(1) We would like to remark that the wavelet
bases (constructed in Section \ref{wavelet}) can also be obtained
using the homeomorphisms $\phi:\R\rightarrow\R$. In fact, the wavelet
basis for the non-linear IFS is just the composition of $\phi$ with
the basis elements of the linear IFS constructed in \cite{waveletsonfractals}. 

(2) For what follows we only need the homeomorphism restricted to
$\widetilde{C}$, i.e. $\phi|_{\widetilde{C}}$. Hence, the restricted
homeomorphism is in fact independent of the functions defined on the
{}``gaps'' of the fractal, i.e. depends only on $\left(\widetilde{\tau}_{a}\right)_{a\in A}$.
\end{rem}

\subsection{The appropriate function space}

It will be essential to construct first the Fourier basis for the
linear Cantor set $\widetilde{C}$ given as the limit set of the IFS
$\widetilde{S}=\left(\widetilde{\tau}_{i}:i\in A\right)$. The Hausdorff
dimension of this set $\widetilde{C}$ is $s=\log p/\log N$. The
restriction of the Hausdorff measure $H^{s}$ to $\widetilde{C}$
will be denoted by $\widetilde{\mu}$, i.e. $\widetilde{\mu}=H^{s}|_{\widetilde{C}}$.
This measure satisfies \[
\widetilde{\mu}=\frac{1}{p}\sum_{i\in A}\widetilde{\mu}\circ\widetilde{\tau}_{i}^{-1}\]
 or equivalently, \[
\int f(x)d\mu(x)=\frac{1}{p}\sum_{i\in A}\int f(\widetilde{\tau}_{i}(x))\, d\widetilde{\mu}(x),\: f\in L^{2}(\widetilde{\mu}),\]
 and hence, it is unique with this property by a theorem of Hutchinson
in \cite{Hutchinson}. Furthermore, $H^{s}(\widetilde{C})=\widetilde{\mu}(\widetilde{C})=1$.

We will consider the homeomorphism $\phi:\widetilde{C}\rightarrow C$
from Section \ref{homemomorphism}, where $C$ is the limit set of
$\mathcal{S}=\left(\sigma_{i},i\in\underline{p}\right)=\left(\tau_{i},i\in A\right)$.
Notice that the homeomorphism $\phi$ is measurable with respect to
the Borel-$\sigma$-algebra. This allows us to consider the space
$L^{2}(\mu)$, where $\mu$ is the transported measure, i.e. $\mu=\widetilde{\mu}\circ\phi^{-1}$,
which coincides with the unique measure on $C$ satisfying \[
\mu=\frac{1}{p}\sum_{i\in A}\mu\circ\tau_{i}^{-1}.\]

The above defined homeomorphism $\phi$ will be used to carry over
a Fourier basis in $L^{2}(\widetilde{\mu})$ to a generalised Fourier
basis in $L^{2}(\mu)$ in Section \ref{sub:Fourier-basis-on}.

\subsection{Construction of the Fourier basis for homogeneous linear IFSs}

\label{fourierc} In this section we state the main results on Fourier
bases for homogeneous linear IFSs from Jørgensen and Pedersen, and
Jørgensen and Dutkay (\cite{dense}, \cite{waveletsonfractals}) without
proofs. First we consider the construction of the Fourier basis on
the Hilbert space $L^{2}(\widetilde{\mu})$ with the inner product
$\langle f|g\rangle_{\widetilde{\mu}}=\int f(x)\overline{g(x)}\, d\widetilde{\mu}(x)$.
The \emph{classical Fourier basis} is of the form $\{e_{n}:\, n\in M\}$
with $M\subset\mathbb{Z}$ and $e_{n}:\, x\mapsto e^{i2\pi nx}$.
This kind of basis does not exist for all $L^{2}$-spaces built on
a fractal set, it depends on the underlying algebraic structure of
the IFS (cf. \cite{waveletsonfractals} and Example \ref{exa:-1/3-Cantor-set}).

Recall the definition of the Fourier transform $\widehat{\mu}$ of
a measure $\widetilde{\mu}$: \[
\widehat{\mu}(t):=\int e^{i2\pi tx}\, d\widetilde{\mu}(x),\, t\in\mathbb{R}.\]

\begin{lem}
[\cite{dense}] The Fourier transform for the measure $\widetilde{\mu}$
satisfies the following relation: \[
\widehat{\mu}(t)=\widehat{\mu}(N^{-1}t)\cdot\kappa_{A}(t),\]
 where $\kappa_{A}(t)=\frac{1}{p}\sum_{a\in A}e^{i2\pi tN^{-1}a}$
and $t\in\mathbb{R}$. 
\end{lem}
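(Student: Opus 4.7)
The plan is to substitute the self-similarity identity for $\widetilde{\mu}$ directly into the definition of $\widehat{\mu}$ and exploit the fact that the maps $\widetilde{\tau}_a$ are affine with common scaling $1/N$, so the character $e^{i2\pi t\,\cdot}$ factors cleanly under the change of variables.

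More concretely, I would begin from
\[
\widehat{\mu}(t) \;=\; \int e^{i2\pi t x}\,d\widetilde{\mu}(x)
\;=\; \int e^{i2\pi t x}\,d\!\left(\frac{1}{p}\sum_{a\in A}\widetilde{\mu}\circ\widetilde{\tau}_{a}^{-1}\right)\!(x),
\]
using the self-similarity stated in the previous subsection. I would then invoke the standard pushforward identity $\int f\, d(\widetilde{\mu}\circ\widetilde{\tau}_{a}^{-1}) = \int f\circ\widetilde{\tau}_{a}\, d\widetilde{\mu}$, which is justified here because each $\widetilde{\tau}_a$ is Borel measurable, and the integrand $e^{i2\pi tx}$ is bounded so there are no integrability subtleties.

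Next, I would substitute the explicit form $\widetilde{\tau}_{a}(x)=\tfrac{x}{N}+\tfrac{a}{N}$, giving
\[
e^{i2\pi t\,\widetilde{\tau}_{a}(x)} \;=\; e^{i2\pi t a/N}\cdot e^{i2\pi (t/N) x}.
\]
The first factor is constant in $x$ and can be pulled out of the integral; the remaining integral is exactly $\widehat{\mu}(t/N)$. Exchanging the finite sum over $a\in A$ with the integral (legitimate as the sum is finite) then yields
\[
\widehat{\mu}(t) \;=\; \widehat{\mu}(N^{-1}t)\cdot\frac{1}{p}\sum_{a\in A} e^{i2\pi t a/N}
\;=\; \widehat{\mu}(N^{-1}t)\cdot\kappa_{A}(t),
\]
which is the asserted identity.

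There is essentially no obstacle here: the statement is a direct transcription of the invariance of $\widetilde{\mu}$ under the self-similar average onto the Fourier side, and the only thing that makes the computation work cleanly (rather than only giving a functional equation in terms of $\kappa_A$ and some remainder) is the affine homogeneity of the $\widetilde{\tau}_a$, i.e. that they share the common linear part $x\mapsto x/N$. If anything, the only point worth flagging is that this linearity is exactly why the lemma is stated for $\widetilde{\mu}$ rather than the non-linear $\mu$: for a genuinely non-linear IFS the analogous computation would not factor and no such clean scaling relation would hold.
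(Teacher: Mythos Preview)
Your proof is correct and is the standard argument. The paper does not actually supply its own proof of this lemma (it is quoted from \cite{dense} without proof), but exactly the pushforward computation you describe is carried out explicitly in Example~\ref{exa:-1/3-Cantor-set} for the special case $N=3$, $A=\{0,2\}$.
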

The following assumption guaranties the existence of a classical Fourier
basis.
\begin{assumption}
\label{ass:UnitaryMatrix} We assume that $0\in A$, and that there
exists a set $L\subset\mathbb{Z}$, such that $0\in L$, $\card L=p$
and \[
H_{AL}:=p^{-1/2}\left(e^{i2\pi\frac{al}{N}}\right)_{a\in A,l\in L}\]
 is unitary. The set $L$ with this property will be called the \emph{dual
set to $A$}.\end{assumption}
\begin{lem}
[\cite{dense}] \label{orthonormal} Under Assumption \ref{ass:UnitaryMatrix}
we have that the set \[
\{e_{\lambda}:\lambda\in\Lambda\}\]
 is orthonormal in $L^{2}(\widetilde{\mu})$, where $e_{\lambda}:\, x\mapsto e^{i2\pi\lambda x}$
and \[
\Lambda:=\{l_{0}+Nl_{1}+N^{2}l_{2}+\dots+N^{k}l_{k}:l_{i}\in L,\; k\in\N\}.\]

\end{lem}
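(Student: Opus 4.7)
My plan is to convert orthonormality of the exponentials into a vanishing statement for $\widehat{\mu}$ and then exploit the self-similarity identity $\widehat{\mu}(t)=\widehat{\mu}(N^{-1}t)\kappa_{A}(t)$ iteratively. Since $\langle e_{\lambda}|e_{\lambda'}\rangle_{\widetilde{\mu}}=\int e^{i2\pi(\lambda-\lambda')x}\,d\widetilde{\mu}(x)=\widehat{\mu}(\lambda-\lambda')$, the normalisation case $\lambda=\lambda'$ reduces to $\widehat{\mu}(0)=\widetilde{\mu}(\widetilde{C})=1$, which is immediate from the previous subsection. It therefore suffices to prove $\widehat{\mu}(\lambda-\lambda')=0$ whenever $\lambda\neq\lambda'$ lie in $\Lambda$.

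Iterating the lemma above $k+1$ times gives the telescoping product $\widehat{\mu}(t)=\widehat{\mu}(t/N^{k+1})\prod_{j=0}^{k}\kappa_{A}(t/N^{j})$, so producing a single vanishing factor is enough. I would fix representations $\lambda=\sum_{i=0}^{k}N^{i}l_{i}$ and $\lambda'=\sum_{i=0}^{k}N^{i}l_{i}'$ with $l_{i},l_{i}'\in L$ (padding the shorter one by $0\in L$), set $d_{i}:=l_{i}-l_{i}'$, and let $j_{0}$ be the smallest index with $d_{j_{0}}\neq 0$. Such a $j_{0}$ exists, for otherwise the two digit sequences coincide and force $\lambda=\lambda'$.

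At this distinguished scale one rewrites $(\lambda-\lambda')/N^{j_{0}+1}=d_{j_{0}}/N+\sum_{i>j_{0}}N^{i-j_{0}-1}d_{i}$, in which the second sum is an integer. Consequently, for every $a\in A$, $e^{i2\pi a(\lambda-\lambda')/N^{j_{0}+1}}=e^{i2\pi a(l_{j_{0}}-l_{j_{0}}')/N}$, so $\kappa_{A}((\lambda-\lambda')/N^{j_{0}})=p^{-1}\sum_{a\in A}e^{i2\pi a(l_{j_{0}}-l_{j_{0}}')/N}$. Column orthonormality of $H_{AL}$ in Assumption~\ref{ass:UnitaryMatrix} reads $\sum_{a\in A}e^{i2\pi a(l-l')/N}=p\,\delta_{l,l'}$ for $l,l'\in L$; with $l_{j_{0}}\neq l_{j_{0}}'$ this factor vanishes, annihilating the whole product and yielding $\widehat{\mu}(\lambda-\lambda')=0$.

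\textbf{Main obstacle.} The only delicate point is the mod-$\mathbb{Z}$ bookkeeping in the phases at the selected scale: the high-order digits $d_{i}$, $i>j_{0}$, must be shown to drop out because their contribution to $a(\lambda-\lambda')/N^{j_{0}+1}$ is an integer, while the low-order digits $d_{i}$, $i<j_{0}$, are zero by minimality of $j_{0}$. Once this isolation is cleanly set up, the unitarity of $H_{AL}$ supplies the required zero automatically and the argument is mechanical.
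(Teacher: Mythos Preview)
The paper does not supply its own proof of this lemma; it is quoted from \cite{dense} without argument. Your proposal is correct and is in fact the standard proof one finds in that reference: reduce orthonormality to $\widehat{\mu}(\lambda-\lambda')=0$, expand $\widehat{\mu}$ via the iterated self-similarity relation $\widehat{\mu}(t)=\widehat{\mu}(t/N^{k+1})\prod_{j=0}^{k}\kappa_{A}(t/N^{j})$, and kill one factor using column orthogonality of $H_{AL}$ at the first differing $L$-digit. The bookkeeping you flag as the ``main obstacle'' is exactly right: minimality of $j_{0}$ removes the low digits, integrality of $a\in A\subset\underline{N}$ together with $i-j_{0}-1\geq 0$ removes the high digits, and the padding by $0\in L$ (legitimate since Assumption~\ref{ass:UnitaryMatrix} requires $0\in L$) lets you compare representations of unequal length. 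There is nothing to add.
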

We now introduce three conditions under which the set $\{e_{\lambda}:\,\lambda\in\Lambda\}$
gives an orthonormal basis, i.e. when this set spans the space $L^{2}(\widetilde{\mu})$.
For this we first need the following definitions. Let the dual Ruelle
Operator for the above setting be given by \[
(R_{L}f)(x):=\frac{1}{p}\sum_{l\in L}\left|m_{0}\left(\frac{x-l}{N}\right)\right|^{2}f\left(\frac{x-l}{N}\right),\quad f\in C(\R),\]
 where $m_{0}:\, t\mapsto\frac{1}{\sqrt{p}}\sum_{a\in A}e^{i2\pi ta}$.
\begin{defn}
(\cite{waveletsonfractals})\label{cycle} Let $\widetilde{\sigma}_{b}:=\rho_{-b,N}$
and $L$ and $A:=\left\{ a_{0},\ldots,a_{p-1}\right\} $ are given
as in Assumption \ref{ass:UnitaryMatrix}. Then the the family $\left(z_{1},z_{2},\dots,z_{k}\right)\in\mathbb{T}^{k}$
with $z_{1}=e^{i2\pi\xi_{1}},z_{2}=e^{i2\pi\xi_{2}},\dots,z_{k}=e^{i2\pi\xi_{k}}$
is called an \emph{$L$-cycle with pairing $\left(b_{1},b_{2},\dots,b_{k+1}\right)\in L^{k+1}$,}
if for \emph{$j=1,\ldots,k$} and $z_{k+1}:=z_{1}$ we have \[
z_{j}=\exp\left(i2\pi\sigma_{b_{j}}(\xi_{j+1})\right),\]
 and $\left|m_{0}(\xi_{j})\right|^{2}=p$, \emph{$j=1,\ldots,k$}.\end{defn}
\begin{prop}
[\cite{dense,waveletsonfractals}] \label{pro:-ONB}Under Assumption
\ref{ass:UnitaryMatrix} the follwing three characterisations of the
existence of an orthonormal basis (ONB) in $L^{2}(\widetilde{\mu})$
hold.
\begin{enumerate}
\item The set $\{e_{\lambda}:\lambda\in\Lambda\}$ is an ONB in $L^{2}(\widetilde{\mu})$,
if and only if $Q\equiv1$, where $Q(t):=\sum_{\lambda\in\Lambda}\left|\widehat{\mu}(t-\lambda)\right|^{2}$,
$t\in\R$ .
\item The set $\{e_{\lambda}:\lambda\in\Lambda\}$ is an ONB in $L^{2}(\widetilde{\mu})$,
if the space \[
\left\{ f\in\operatorname{Lip}(\mathbb{R}):f\geq0,\: f(0)=1,\: R_{L}(f)=f\right\} \]
 is one-dimensional.
\item The set $\{e_{\lambda}:\lambda\in\Lambda\}$ is an ONB in $L^{2}(\widetilde{\mu})$,
if the only $L$-cycle is trivial, i.e. is equal to $(1)$. 
\end{enumerate}
\end{prop}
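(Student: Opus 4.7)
The plan is to prove the three assertions in cascade: (1) provides the fundamental Plancherel-type characterisation, (2) follows from (1) by showing that $Q$ itself is a fixed point of $R_{L}$, and (3) follows from (2) by matching nontrivial $R_{L}$-fixed points with nontrivial $L$-cycles. Orthonormality is already in hand (Lemma \ref{orthonormal}), so the issue throughout is completeness, and the main tools are the scaling identity $\widehat{\mu}(t)=\widehat{\mu}(t/N)\kappa_{A}(t)$ and the digit-type self-similarity $\Lambda=L+N\Lambda$ built into the definition of $\Lambda$.

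For (1), I would rewrite $\widehat{\mu}(t-\lambda)=\langle e_{t},e_{\lambda}\rangle_{\widetilde{\mu}}$, so that $Q(t)=\sum_{\lambda\in\Lambda}|\langle e_{t},e_{\lambda}\rangle_{\widetilde{\mu}}|^{2}$. Since $\|e_{t}\|_{\widetilde{\mu}}^{2}=\widetilde{\mu}(\widetilde{C})=1$, Bessel's inequality applied to the orthonormal system $\{e_{\lambda}\}$ yields $Q(t)\le 1$, with equality exactly when $e_{t}$ lies in the closed span $V:=\overline{\operatorname{span}}\{e_{\lambda}:\lambda\in\Lambda\}$. As Stone--Weierstrass on the compact set $\widetilde{C}$ makes $\{e_{n}:n\in\Z\}$ dense in $C(\widetilde{C})$, hence in $L^{2}(\widetilde{\mu})$, the equality $Q\equiv 1$ is equivalent to $V\supseteq\{e_{n}:n\in\Z\}$, and hence to completeness of the basis.

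For (2), the crux is to check that $Q$ is a nonnegative Lipschitz fixed point of $R_{L}$ with $Q(0)=1$. Orthonormality immediately gives $Q(0)=\sum_{\lambda}|\widehat{\mu}(\lambda)|^{2}=\sum_{\lambda}\delta_{\lambda,0}=1$. Using the scaling identity and the $1$-periodicity of $m_{0}$ (so $m_{0}((t-l)/N-\lambda)=m_{0}((t-l)/N)$ for $\lambda\in\Z$), one computes
\begin{align*}
(R_{L}Q)(t) &=\frac{1}{p}\sum_{l\in L}\bigl|m_{0}\bigl(\tfrac{t-l}{N}\bigr)\bigr|^{2}\sum_{\lambda\in\Lambda}\bigl|\widehat{\mu}\bigl(\tfrac{t-l}{N}-\lambda\bigr)\bigr|^{2}\\
&=\sum_{l\in L}\sum_{\lambda\in\Lambda}\bigl|\widehat{\mu}(t-l-N\lambda)\bigr|^{2}=Q(t),
\end{align*}
the last step being the bijection $L\times\Lambda\to\Lambda$, $(l,\lambda)\mapsto l+N\lambda$. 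Unitarity of $H_{AL}$ gives $\sum_{l\in L}|m_{0}((t-l)/N)|^{2}=p$, so the constant function $\mathbf{1}$ is another such fixed point. One-dimensionality of the fixed-point space therefore forces $Q$ proportional to $\mathbf{1}$, and $Q(0)=1$ gives $Q\equiv 1$; then (1) finishes.

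For (3), in view of (2) it suffices to show that the absence of nontrivial $L$-cycles forces every nonnegative Lipschitz fixed point $f$ of $R_{L}$ with $f(0)=1$ to equal $\mathbf{1}$. This Perron--Frobenius-type step is the main obstacle. My plan is to analyse the maximum set $M:=\{\xi\in\mathbb{T}:f(\xi)=\|f\|_{\infty}\}$: the identity $f(\xi)=(R_{L}f)(\xi)$ expresses $f(\xi)$ as a convex combination of the values $f(\widetilde{\sigma}_{l}(\xi))$ with weights $p^{-1}|m_{0}(\widetilde{\sigma}_{l}(\xi))|^{2}$ summing to $1$, so at $\xi\in M$ the mass concentrates on indices $l$ with $|m_{0}(\widetilde{\sigma}_{l}(\xi))|^{2}=p$, and the preimages $\widetilde{\sigma}_{l}(\xi)$ again lie in $M$. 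Compactness of $\mathbb{T}$ then extracts a periodic orbit of inverse branches confined to $\{|m_{0}|^{2}=p\}$, which on passage to $z_{j}=e^{i2\pi\xi_{j}}$ is precisely an $L$-cycle in the sense of Definition \ref{cycle}. Triviality of $L$-cycles forces this orbit to reduce to the trivial one passing through $\xi=0$, whence $\|f\|_{\infty}=f(0)=1$ and $f\le 1$. A symmetric argument on the minimum set, using $R_{L}\mathbf{1}=\mathbf{1}$ and positivity of the weights at points where $m_{0}$ is nonvanishing, propagates the value $1$ to the complement, yielding $f\equiv\mathbf{1}$. The delicate points are the combinatorial bookkeeping that identifies invariant maximising orbits with cycles in the multiplicative form of Definition \ref{cycle}, and the careful handling of zeros of $m_{0}$ when transferring equality off the cycle; this is where Lipschitz regularity of $f$ is essential.
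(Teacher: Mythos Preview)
The paper does not give its own proof of this proposition. It is explicitly quoted from \cite{dense,waveletsonfractals}; the surrounding text says ``In this section we state the main results on Fourier bases for homogeneous linear IFSs from J{\o}rgensen and Pedersen, and J{\o}rgensen and Dutkay (\cite{dense}, \cite{waveletsonfractals}) without proofs.'' So there is nothing to compare against in this paper.

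That said, your outline reproduces faithfully the strategy that actually appears in the cited sources: (1) is the Bessel/Parseval criterion via $\widehat{\mu}(t-\lambda)=\langle e_{t}|e_{\lambda}\rangle_{\widetilde{\mu}}$ together with density of trigonometric polynomials; (2) is the observation that $Q$ itself lies in the indicated eigenspace of $R_{L}$; and (3) is the Perron--Frobenius/maximum-principle argument extracting an $L$-cycle from the maximum set of any nonnegative Lipschitz eigenfunction. Two points deserve more care if you want a complete argument rather than a sketch. First, in (2) you use that $Q$ belongs to the stated class, but you never verify that $Q\in\operatorname{Lip}(\mathbb{R})$; this requires a short estimate using $|\widehat{\mu}(s)-\widehat{\mu}(s')|\le 2\pi|s-s'|\int|x|\,d\widetilde{\mu}$ and the uniform bound $Q\le 1$. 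Second, the ``bijection $L\times\Lambda\to\Lambda$, $(l,\lambda)\mapsto l+N\lambda$'' is not automatic for arbitrary $L\subset\Z$: one needs that distinct pairs give distinct sums, which in \cite{dense} follows from the unitarity of $H_{AL}$ (forcing the elements of $L$ to be pairwise incongruent modulo $N$) together with $\Lambda\subset\Z$. Your handling of (3) is honest about its gaps; the compactness/periodic-orbit extraction and the transfer of equality off the zero set of $m_{0}$ are exactly where \cite{dense} does the real work.
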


\subsection{Fourier basis on homeomorphic fractals\label{sub:Fourier-basis-on}}

In this section, the above constructed Fourier basis for homogeneous
linear IFSs will be carried over to $L^{2}\left(\mu\right)$. In this
way a generalised Fourier basis is obtained. In fact, the following
proposition shows that the basis elements obtained in our analysis
can again be regarded as characters. Its proof is immediate. 
\begin{prop}
Let $\left(e_{n}\right)$ be the classical Fourier basis on $\R$
and $\phi:\R\rightarrow\R$ be a homeomorphism. Then $d_{n}:=e_{n}\circ\phi^{-1}$
define characters on $(\mathbb{R},\sharp)$, where the addition $\sharp:\mathbb{R}\times\mathbb{R}\rightarrow\mathbb{R}$
is given by $(x,y)\mapsto\phi\left(\phi^{-1}(x)+\phi^{-1}(y)\right)$. 
\end{prop}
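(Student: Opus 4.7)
The plan is to verify directly, by unwinding definitions, that each $d_n=e_n\circ\phi^{-1}$ is a group homomorphism from $(\mathbb{R},\sharp)$ into the multiplicative group $(\mathbb{T},\cdot)$; this is exactly what it means for $d_n$ to be a character.

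First I would note that the operation $\sharp$ turns $(\mathbb{R},\sharp)$ into an abelian group: by construction $\sharp$ is the pull-back (through $\phi^{-1}$) of the usual addition on $\mathbb{R}$, so $\phi^{-1}\colon(\mathbb{R},\sharp)\to(\mathbb{R},+)$ is a group isomorphism with inverse $\phi$. In particular the identity is $\phi(0)$, inverses are given by $x\mapsto\phi(-\phi^{-1}(x))$, and associativity and commutativity follow immediately. This legitimises talking about characters on $(\mathbb{R},\sharp)$.

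The core step is then the one-line computation
\begin{align*}
d_n(x\sharp y) &= e_n\bigl(\phi^{-1}(x\sharp y)\bigr)
             = e_n\bigl(\phi^{-1}(\phi(\phi^{-1}(x)+\phi^{-1}(y)))\bigr)\\
           &= e_n\bigl(\phi^{-1}(x)+\phi^{-1}(y)\bigr)
             = e_n(\phi^{-1}(x))\,e_n(\phi^{-1}(y))
             = d_n(x)\,d_n(y),
\end{align*}
where the penultimate equality uses that the classical exponentials $e_n(t)=e^{i2\pi nt}$ are characters on $(\mathbb{R},+)$. Since $|d_n|=|e_n|\circ\phi^{-1}\equiv 1$, each $d_n$ indeed takes values in $\mathbb{T}$, so the homomorphism property above is exactly the character property.

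There is no real obstacle here; the content of the proposition is entirely that $\sharp$ has been \emph{defined} to make $\phi$ a group isomorphism onto $(\mathbb{R},+)$, so characters of the latter transport to characters of the former via pre-composition with $\phi^{-1}$. The only thing one should do with slightly more care, if one wants to claim the $d_n$ are continuous characters, is to remark that $\phi^{-1}$ is continuous (which is part of $\phi$ being a homeomorphism), so each $d_n$ is continuous as well.
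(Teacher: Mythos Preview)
Your proof is correct and is exactly the direct verification the paper has in mind; the paper itself gives no detailed argument and simply states that the proof is immediate. Your write-up just makes explicit the one-line computation and the observation that $\phi^{-1}$ is a group isomorphism $(\mathbb{R},\sharp)\to(\mathbb{R},+)$, which is precisely the content behind ``immediate''.
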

Now we are turning to the construction of the generalised Fourier
basis on $L^{2}(\mu)$. It will be crucial that we impose the restriction
$0\in C$. Suppose that $\phi:\widetilde{C}\rightarrow C$ is the
homeomorphism introduced in Section \ref{homemomorphism}. We begin
with the analogue statement to Lemma \ref{orthonormal}.
\begin{lem}
Let $\{e_{\lambda}:\lambda\in\Lambda\}$ (as specified above) be orthonormal
in $L^{2}(\widetilde{\mu})$, then $\{e_{\lambda}\circ\phi^{-1}:\lambda\in\Lambda\}$
is orthonormal in $L^{2}(\mu)$. \end{lem}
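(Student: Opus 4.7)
The plan is to verify orthonormality of $\{e_\lambda\circ\phi^{-1}:\lambda\in\Lambda\}$ in $L^2(\mu)$ by a straightforward change of variables, exploiting the fact that $\mu$ is by construction the pushforward $\widetilde{\mu}\circ\phi^{-1}$.

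First I would fix $\lambda,\lambda'\in\Lambda$ and write out the inner product in $L^2(\mu)$:
\[
\langle e_\lambda\circ\phi^{-1}\,|\,e_{\lambda'}\circ\phi^{-1}\rangle_{\mu}=\int_{C}e_\lambda(\phi^{-1}(x))\,\overline{e_{\lambda'}(\phi^{-1}(x))}\,d\mu(x).
\]
Since $\phi:\widetilde{C}\to C$ is a homeomorphism, hence a bijective Borel measurable map with Borel measurable inverse, and since $\mu=\widetilde{\mu}\circ\phi^{-1}$ by the very definition given in the preceding subsection, the abstract change-of-variables (pushforward) formula applies to the bounded continuous integrand $e_\lambda\cdot\overline{e_{\lambda'}}$ composed with $\phi^{-1}$. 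This transforms the integral into one over $\widetilde{C}$ against $\widetilde{\mu}$:
\[
\int_{C}(e_\lambda\overline{e_{\lambda'}})(\phi^{-1}(x))\,d(\widetilde{\mu}\circ\phi^{-1})(x)=\int_{\widetilde{C}}e_\lambda(y)\,\overline{e_{\lambda'}(y)}\,d\widetilde{\mu}(y).
\]
The right-hand side equals $\langle e_\lambda\,|\,e_{\lambda'}\rangle_{\widetilde{\mu}}=\delta_{\lambda\lambda'}$ by the standing hypothesis that $\{e_\lambda:\lambda\in\Lambda\}$ is orthonormal in $L^2(\widetilde{\mu})$ (Lemma \ref{orthonormal}).

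There is essentially no obstacle: the only point that warrants a brief mention is that $\phi$ (and hence $\phi^{-1}$) is Borel measurable, which has already been noted in the paragraph defining $\mu$, and that the pushforward identity $\int g\circ\phi^{-1}\,d\mu=\int g\,d\widetilde{\mu}$ holds for every bounded Borel function $g$ on $\widetilde{C}$. Applying this with $g=e_\lambda\overline{e_{\lambda'}}$ yields the claim, so the proof reduces to one line of change of variables plus the invocation of Lemma \ref{orthonormal}.
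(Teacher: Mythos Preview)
Your argument is correct and follows essentially the same route as the paper: both compute the $L^2(\mu)$ inner product, apply the change of variables $\mu=\widetilde{\mu}\circ\phi^{-1}$ to reduce to the $L^2(\widetilde{\mu})$ inner product, and then invoke the assumed orthonormality. The only differences are cosmetic (you spell out the measurability justification and the pushforward formula a bit more explicitly).
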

\begin{proof}
We have\[
\begin{array}{lll}
\langle e_{\lambda}\circ\phi^{-1}|e_{\lambda'}\circ\phi^{-1}\rangle_{\mu} & = & \int\overline{e_{\lambda}\circ\phi^{-1}}e_{\lambda'}\circ\phi^{-1}\, d\mu\\
 & = & \int e^{-i2\pi\phi^{-1}(t)\lambda}e^{i2\pi\phi^{-1}(t)\lambda'}\, d\mu(t)\\
 & = & \int e^{-i2\pi t\lambda}e^{i2\pi t\lambda'}\, d\widetilde{\mu}(t)\\
 & = & \langle e_{\lambda}|e_{\lambda'}\rangle_{\widetilde{\mu}}\\
 & = & \delta_{\lambda\lambda'}.\end{array}\]
 
\end{proof}
The existence of an ONB can be also transferred by the homeomorphism
$\phi$.
\begin{thm}
\label{thm:homelFourierBases}If $\{e_{\lambda}:\lambda\in\Lambda\}$
is an ONB in $L^{2}(\widetilde{\mu})$, then $\{e_{\lambda}\circ\phi^{-1}:\lambda\in\Lambda\}$
is an ONB in $L^{2}(\mu)$. \end{thm}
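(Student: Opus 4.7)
The plan is to exhibit a unitary isomorphism $U_\phi : L^2(\mu) \to L^2(\widetilde{\mu})$ induced by the homeomorphism $\phi$ and then observe that this unitary carries the candidate ONB in $L^2(\mu)$ exactly onto the known ONB $\{e_\lambda : \lambda \in \Lambda\}$ in $L^2(\widetilde{\mu})$. Since unitary operators preserve orthonormal bases, the claim will follow immediately from the preceding lemma together with the hypothesis.

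In more detail, I would first define $U_\phi f := f \circ \phi$ for $f \in L^2(\mu)$. The identity $\mu = \widetilde{\mu} \circ \phi^{-1}$ is precisely the change-of-variables formula
\begin{equation*}
\int_{\widetilde{C}} |f \circ \phi|^2 \, d\widetilde{\mu} = \int_{C} |f|^2 \, d(\widetilde{\mu} \circ \phi^{-1}) = \int_{C} |f|^2 \, d\mu,
\end{equation*}
so $U_\phi$ is an isometry from $L^2(\mu)$ into $L^2(\widetilde{\mu})$. Because $\phi : \widetilde{C} \to C$ is a homeomorphism (hence bijective and bimeasurable), the assignment $g \mapsto g \circ \phi^{-1}$ provides a two-sided inverse of $U_\phi$, so $U_\phi$ is in fact a surjective isometry, i.e.\ unitary.

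Next I would simply compute
\begin{equation*}
U_\phi\bigl(e_\lambda \circ \phi^{-1}\bigr) = e_\lambda \circ \phi^{-1} \circ \phi = e_\lambda, \qquad \lambda \in \Lambda.
\end{equation*}
Thus $U_\phi$ sends the family $\{e_\lambda \circ \phi^{-1} : \lambda \in \Lambda\}$ bijectively onto $\{e_\lambda : \lambda \in \Lambda\}$. By hypothesis the latter is an ONB of $L^2(\widetilde{\mu})$, and since $U_\phi^{-1}$ is unitary, the preimage family is an ONB of $L^2(\mu)$.

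There is essentially no obstacle here: orthonormality has already been handled in the previous lemma, and the only new ingredient is completeness, which is a formal consequence of unitarity. The one point worth flagging in the write-up is to make explicit that $\phi$ being a homeomorphism between the compact sets $\widetilde{C}$ and $C$ (as established in Section \ref{homemomorphism}) ensures both measurability of $\phi^{-1}$ and that $g \circ \phi^{-1}$ actually lies in $L^2(\mu)$ whenever $g \in L^2(\widetilde{\mu})$, so that the inverse change of variables is legitimate and $U_\phi$ is genuinely onto rather than merely isometric.
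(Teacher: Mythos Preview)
Your proposal is correct and is essentially the same argument as the paper's: both rely on the fact that $f\mapsto f\circ\phi$ is an isometric isomorphism from $L^2(\mu)$ onto $L^2(\widetilde{\mu})$ carrying $e_\lambda\circ\phi^{-1}$ to $e_\lambda$. The paper phrases the completeness step as a direct contradiction (if $f\notin\cl\spn\{e_\lambda\circ\phi^{-1}\}$ then $f\circ\phi\notin\cl\spn\{e_\lambda\}$, forcing $f=0$), whereas you package the same reasoning more cleanly via the unitarity of $U_\phi$.
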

\begin{proof}
Only the spanning condition remains to be checked. So let $f\in L^{2}(\mu)\backslash\cl\text{span}\{e_{\lambda}\circ\phi^{-1}:\lambda\in\Lambda\}$,
then $f\circ\phi\in L^{2}(\widetilde{\mu})\backslash\cl\text{span}\{e_{\lambda}:\lambda\in\Lambda\}$.
Hence, $f\circ\phi=0$, since $\{e_{\lambda}:\lambda\in\Lambda\}$
is an ONB of $L^{2}\left(\widetilde{\mu}\right)$. Since $\phi$ bijective,
we also have $f=0$. 
\end{proof}

\section{Examples}

\subsection{Wavelet analysis}
\begin{example}
[$1/3$-Cantor set]\textbf{ \label{exa:-Cantor-set.}} As an example
for the affine case we will determine the wavelet basis for the $1/3$-Cantor
set $C_{3}$ (we refer to \cite{waveletsonfractals} for further details).
The IFS on $[0,1]$ for this set is $\mathcal{S}=\left(\sigma_{k}:k=0,1\right)$
with $\sigma_{k}=\rho_{a_{k},3}:x\mapsto\frac{x+a_{k}}{3}$, $a_{0}:=0$
and $a_{1}:=2$ and the gap filling IFS is $\mathcal{T}=\left(\tau_{k}:k=0,1,2\right)$
with $\tau_{k}:=\rho_{k,3}$. The father wavelet is $\varphi=\chi_{C_{3}}$.
The resulting filter functions on $\mathbb{T}$ are \[
\begin{array}{lll}
m_{0}: & z\mapsto & \frac{1}{\sqrt{2}}(1+z^{2}),\\
m_{1}: & z\mapsto & z,\\
m_{2}: & z\mapsto & \frac{1}{\sqrt{2}}(1-z^{2}).\end{array}\]
 So the mother wavelets are, for $x\in\R$, \[
\begin{array}{lll}
\psi_{1}(x) & = & \sqrt{2}\chi_{C_{3}}(3x-1),\\
\psi_{2}(x) & = & \chi_{C_{3}}(3x)-\chi_{C_{3}}(3x-2).\end{array}\]
 Furthermore, the basis of $L^{2}(H^{s})$, where $s=\log2/\log3$
and $H^{s}$ is the $s$-Hausdorff measure (cf. \cite{waveletsonfractals,Falconer}),
is \[
\left\{ x\mapsto2^{-k/2}\psi_{i}(3^{k}x-l):i=1,2,\, k,l\in\mathbb{Z}\right\} .\]

\end{example}
\begin{example}
[$1/4$-Cantor set with one gap-filling contraction] We now consider
the IFS $\mathcal{S}:=\left(\sigma_{0},\sigma_{1}\right)$ and the
gap filling IFS $\mathcal{T}:=\left(\tau_{0},\tau_{1},\tau_{2}\right)$
with $\sigma_{0}=\tau_{0}=\rho_{0,4},\,\sigma_{1}=\tau_{2}=\rho_{3,4},\,\tau_{1}:x\mapsto\frac{x}{2}+\frac{1}{4}$.
Then the limit set of $\mathcal{S}$ is the $1/4$-Cantor set $C_{4}$
and let $H$ be the fractal measure constructed in Subsection \ref{kolmo}.
Already this example is not covered by \cite{waveletsonfractals}
since the system is not homogeneous, i.e. $\tau_{1}$ has a different
scaling than $\tau_{0}$ and $\tau_{3}$ .
\end{example}
The operators $T$ and $U$ for $f\in L^{2}(H)$ are then given by
$(Tf)(x):=f(x-1)$ and $(Uf)(x):=\frac{1}{\sqrt{2}}f\left(\sigma^{-1}(x)\right)$,
where the scaling function $\sigma$ restricted to $[0,1]$ is given
by \[
\sigma(x):=\mathbbm{1}_{[0,\frac{1}{4})}(x)\cdot4x+\mathbbm{1}_{[\frac{1}{4},\frac{3}{4})}(x)\cdot\left(2x+\frac{1}{2}\right)+\mathbbm1_{[\frac{3}{4},1)}(x)\cdot\left(4x-1\right),\, x\in[0,1].\]
 The father wavelet is $\varphi=\chi_{C_{4}}$. The filter functions
on $\mathbb{T}$ for the construction of the mother wavelets are the
same as for the $1/3$-Cantor case (see Example \ref{exa:-Cantor-set.}),
because the form of the filter functions depends only on the number
and position of the gaps, i.e. \[
\begin{array}{lll}
m_{0}(z) & = & \frac{1}{\sqrt{2}}(1+z^{2}),\\
m_{1}(z) & = & z,\\
m_{2}(z) & = & \frac{1}{\sqrt{2}}(1-z^{2}).\end{array}\]
 So the mother wavelets are given, for $x\in[0,1]$, by \[
\begin{array}{lll}
\psi_{1}(x) & = & (U^{-1}m_{1}(T))\varphi(x)=\sqrt{2}\varphi\left(\sigma(x)-1\right)\\
 & = & \sqrt{2}\varphi\left(\1(x)\cdot4x+\1_{[\frac{1}{4},\frac{3}{4})}(x)\cdot\left(2x+\frac{1}{2}\right)+\1_{[\frac{3}{4},1)}(x)\cdot\left(4x-1\right)-1\right),\\
\psi_{2}(x) & = & (U^{-1}m_{2}(T))\varphi=\varphi\left(\sigma(x)\right)-\varphi\left(\sigma(x)-2\right)\\
 & = & \varphi\left(\1_{[0,\frac{1}{4})}(x)\cdot4x+\1_{[\frac{1}{4},\frac{3}{4})}(x)\cdot\left(2x+\frac{1}{2}\right)+\1_{[\frac{3}{4},1)}(x)\cdot\left(4x-1\right)\right)\\
 &  & -\varphi\left(\1_{[0,\frac{1}{4})}(x)\cdot4x+\1_{[\frac{1}{4},\frac{3}{4})}(x)\cdot\left(2x+\frac{1}{2}\right)+\1_{[\frac{3}{4},1)}(x)\cdot\left(4x-1\right)-2\right).\end{array}\]

Thus, the orthonormal basis for $L^{2}(H)$ is \[
\left\{ U^{n}T^{k}\psi_{i}:\, i=1,2,\, n,k\in\mathbb{Z}\right\} .\]

\subsection{Fourier bases}
\begin{example}
[$1/4$-Cantor set (\cite{analysis})]\textbf{ }Let us recall the standard
example for a Fourier basis for the $1/4$-Cantor set $C_{4}$ supporting
the Cantor measure $\mu_{4}$ and with Hausdorff dimension equal to
$1/2$. This set is the limit set of the IFS on $[0,1]$ $\mathcal{S}=\left(\sigma_{0},\sigma_{1}\right)$
given by$\sigma_{0}=\rho_{0,4}$ and $\sigma_{1}=\rho_{3,4}$. Hence,
in Assumption \ref{ass:UnitaryMatrix} we have $A=\{0,3\}$. For $L:=\{0,2\}$
the matrix $H_{AL}=2^{-1/2}\left(e^{i2\pi al/4}\right)_{a\in A,l\in L}$
is unitary and so the set $\left\{ e_{\lambda}:\lambda\in\Lambda\right\} $
is orthonormal in $L^{2}(\mu_{4})$, where \[
\Lambda=\left\{ \sum_{j=0}^{k}l_{j}4^{j}:\, l_{j}\in\{0,2\},\, k\in\N\right\} .\]
 To show now that $\{e_{\lambda}:\,\lambda\in\Lambda\}$ is an ONB
we will use the characterization by $L$-cycles as stated in Proposition
\ref{pro:-ONB}. We have that $z_{\ell}=e^{i2\pi\xi_{\ell}}\in\mathbb{T}$,
$\ell=1,\ldots,k+1$, is an $L$-cycle of length $k+1$ for $b_{1},\dots,b_{k+1}\in\{0,2\}$
if $z_{j}=e^{i2\pi\frac{\xi_{j+1}-b_{j}}{4}}$, $j=1,\ldots,k$, and
$z_{k+1}=z_{1}$. Thus, $z_{j}=e^{i2\pi\frac{\xi_{j+1}}{4}}$ or $z_{j}=e^{i2\pi\frac{\xi_{j+1}-2}{4}}$
for $j=1,\ldots,k$ and $z_{1}=z_{k+1}$. These conditions can only
be satisfied for $k=0$, i.e. for the cycle $\left(1\right)$. Hence
by Proposition \ref{pro:-ONB}, $\{e_{\lambda}:\lambda\in\Lambda\}$
gives an ONB basis in $L^{2}(\mu_{4})$. 
\end{example}
\begin{example}
[$1/3$-Cantor set]\textbf{ \label{exa:-1/3-Cantor-set}} The $1/3$-Cantor
set is the example for the case where a Fourier basis in the sense
of \cite{waveletsonfractals} does not exists. The $1/3$-Cantor set
$C_{3}$ is given by the  IFS $\mathcal{S}=\left(\sigma_{0},\sigma_{1}\right)$
acting on $[0,1]$ with $\sigma_{0}=\rho_{0,3}$ and $\sigma_{1}=\rho_{2,3}$.
Consequently, in Assumption \ref{ass:UnitaryMatrix} we have $A=\{0,2\}$.
To get a Fourier basis, for the orthonormality a set $L\subset\mathbb{Z}$
is sufficient\underbar{ }such that $\card L=2$ and $H_{AL}$ is unitary.
But it is not possible to find such a set $L$ satisfying these conditions
(cf. \cite{analysis,dense}). If we would relax the condition $L\subset\Z$,
we could choose $L=\{0,\frac{3}{4}\}$ to obtain $H_{AL}$ unitary.
If we now set $\Lambda:=\left\{ \frac{3}{4}(l_{0}+3l_{1}+3^{2}l_{2}+\dots+3^{k}l_{k}):\: l_{i}\in\{0,1\},\, k\in\N\right\} $
we will find that $\{e_{\lambda}:\lambda\in\Lambda\}$ is not orthonormal.
In fact, if we consider $\lambda=\frac{3}{4}$ and $\lambda'=\frac{9}{4}\in\Lambda$,
then \[
\begin{array}{lll}
\langle e_{\lambda}|e_{\lambda'}\rangle_{\mu} & = & \int e^{i2\pi x\left(\frac{9}{4}-\frac{3}{4}\right)}d\mu(x)=\widehat{\mu}\left(\frac{2}{3}\right).\end{array}\]
Since (cf. \cite{dense})\[
\begin{array}{lll}
\widehat{\mu}(t) & = & \int e^{i2\pi tx}\: d\mu(x)\\
 & = & \frac{1}{2}\left(\int e^{i2\pi\frac{t}{3}x}\: d\mu(x)+\int e^{i2\pi\frac{t}{3}x}\cdot e^{i2\pi t\frac{2}{3}}\: d\mu(x)\right)\\
 & = & \frac{1}{2}\left(\widehat{\mu}(\frac{t}{3})+e^{i\frac{4}{3}\pi t}\widehat{\mu}\left(\frac{t}{3}\right)\right)\\
 & = & \frac{1}{2}\left(1+e^{i\frac{4}{3}\pi t}\right)\cdot\widehat{\mu}\left(\frac{t}{3}\right).\end{array}\]
we find\[
\widehat{\mu}\left(\frac{3}{2}\right)=\underbrace{\frac{1}{2}\left(1+e^{i2\pi}\right)}_{=1}\cdot\widehat{\mu}\left(\frac{1}{2}\right)\neq0,\]

\end{example}
This shows that the condition $L\subset\mathbb{Z}$ cannot be omitted.
In fact by \cite{dense,JorgensenPedersen} there does not exist a
set $L\subset\R$ such that $\left\{ e_{\lambda}:\lambda\in\Lambda\right\} $
is a Fourier basis. 
\begin{rem}
(1) From \cite{analysis} we know that there are no more than two
orthogonal functions $e_{\lambda}$ (for any $\lambda\in\mathbb{R}$)
in the Hilbert space $L^{2}(\mu)$. 

(2) It has been shown in \cite{JorgensenPedersen} that for an IFS
with two branches of the form $\sigma_{i}(x)=N^{-1}x+b_{i}$, with
$b_{i}\in\{0,a\}$, $i=1,\,2$, $a\in\R\setminus\{0\}$ and $N\in\Z\setminus\left\{ -1,0,1\right\} $
such that the OSC is satisfied there does not exist a Fourier basis
for any $a\in\R\setminus\{0\}$ if $N$ is odd and there exists a
basis for all $a\in\R\setminus\{0\}$ if $N$ is even and $|N|\geq4$.\end{rem}
\begin{example}
[A generalised Fourier basis on the $1/3$-Cantor set]\textbf{\label{exa:A-generalised-Fourier}}
As seen in the last example, it is not possible to construct a classical
Fourier basis in the sense of \cite{analysis} on $L^{2}(\mu_{3})$,
where $\mu_{3}$ is the Cantor measure on the $1/3$-Cantor set $C_{3}$
given by $\mathcal{S}=\left(\sigma_{0},\sigma_{1}\right)$, $\sigma_{0}=\rho_{0,3}$,
$\sigma_{1}=\rho_{2,3}$. In Section \ref{homemomorphism} we have
shown that there exists a homeomorphism $\phi$ conjugating the IFS
$\mathcal{T}=\left(\tau_{k},k\in\left\{ 0,1,2\right\} \right)$ with
$\tau_{k}=\rho_{k,3}$ and $\widetilde{\mathcal{T}}=\left(\widetilde{\tau}_{0},\widetilde{\tau}_{1},\widetilde{\tau}_{2}\right)$
with $\widetilde{\tau}_{0}=\rho_{0,4}$, $\widetilde{\tau}_{1}:x\mapsto\frac{2x+1}{4}$,
$\widetilde{\tau}_{2}=\rho_{3,4}$ (see Fig. \ref{fig:Homeomorphism}).
\begin{figure}
\center\includegraphics[width=0.45\textwidth]{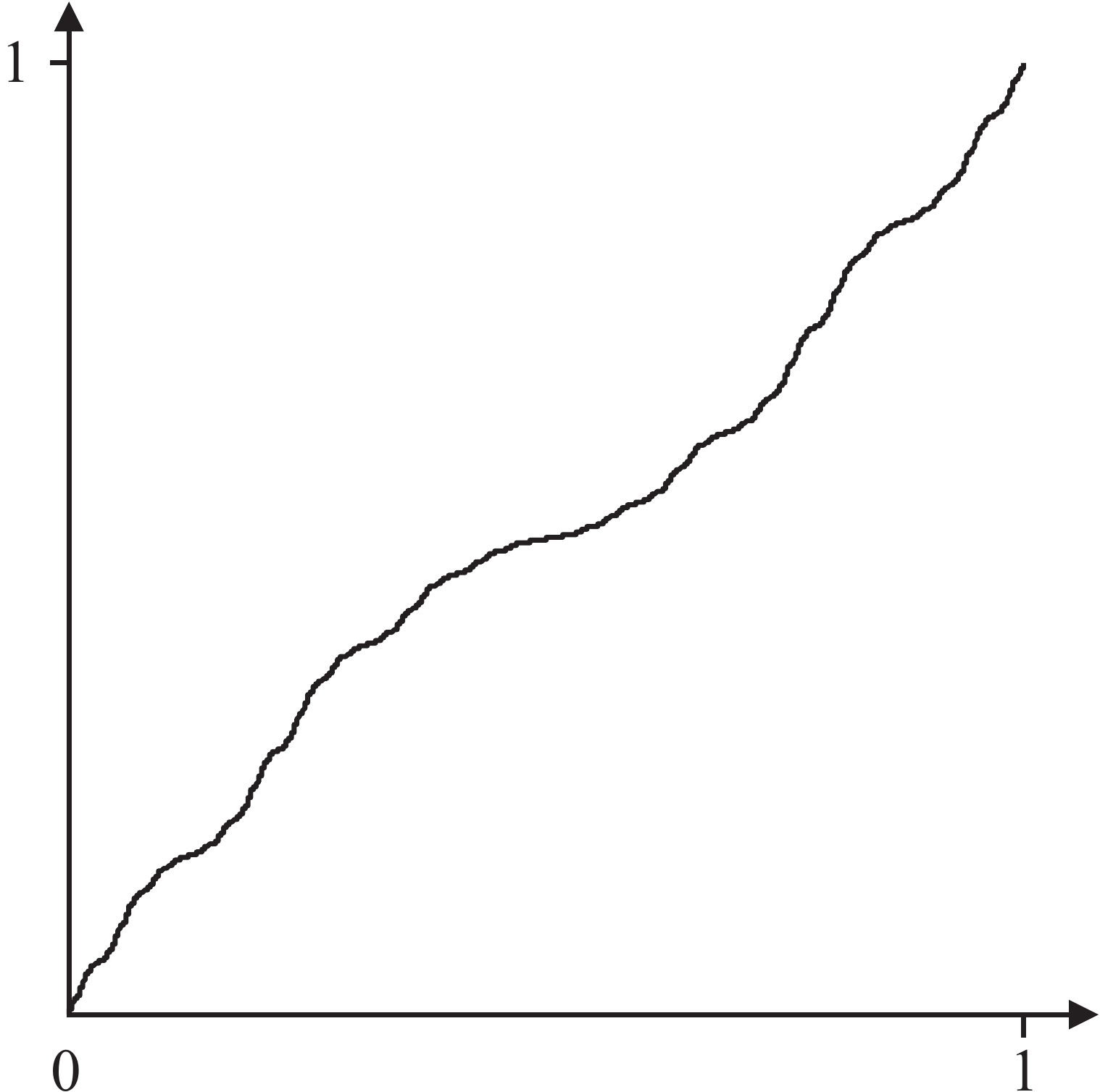}\caption{\label{fig:Homeomorphism}Homeomorphism $\phi:\left[0,1\right]\to\left[0,1\right]$
conjugating the IFSs $\left(\tau_{0},\tau_{1},\tau_{2}\right)$ and
$\left(\widetilde{\tau}_{0},\widetilde{\tau}_{1},\widetilde{\tau}_{2}\right)$
from Example \ref{exa:A-generalised-Fourier}.}

\end{figure}
 Note that $\mu_{3}=\mu_{4}\circ\phi^{-1}$ and that the homeomorphism
restricted to the $1/4$-Cantor set $C_{4}$ is given explicitly by
\[
\begin{array}{lllll}
\phi: & C_{4} & \rightarrow & C_{3}\\
 & \sum_{i}\frac{a_{i}}{4^{i}} & \mapsto & \sum_{i}\frac{\frac{2}{3}a_{i}}{3^{i}}, & a_{i}\in\{0,3\}.\end{array}\]
 Consequently, by Theorem \ref{thm:homelFourierBases} the Fourier
basis of $L^{2}\left(\mu_{4}\right)$ can be carried over to $L^{2}\left(\mu_{3}\right)$.
As mentioned above $\{e_{\lambda}:\lambda\in\Lambda\}$ with $\Lambda=\{\sum_{j=0}^{k}l_{j}4^{j}:\: l_{j}\in\{0,2\},\, k\in\N\}$
is an ONB in $L^{2}(\mu_{4})$ and hence $\{e_{\lambda}\circ\phi^{-1}:\lambda\in\Lambda\}$
is an ONB in $L^{2}(\mu_{3})$ . \end{example}

\end{document}